\documentclass[12pt]{article}
\usepackage[utf8]{inputenc}
\usepackage{eurosym}
\usepackage{epic,latexsym,amssymb,amsmath}
\usepackage{tikz}
\usepackage{amsmath,amsfonts,latexsym, amssymb, amsthm}
\usepackage[mathscr]{euscript}
\usepackage{mathrsfs}
\usepackage{graphics}
\usepackage{float}
\usepackage{stmaryrd}
\usepackage{color}
\usepackage{geometry}
\usepackage{lscape}
\usepackage{comment}
\usepackage[pdfa]{hyperref}
\geometry{margin=1in}
\newtheorem{thm}{Theorem}[section]
\newtheorem{prop}[thm]{Proposition}
\newtheorem{lemma}[thm]{Lemma}
\newtheorem*{lemma*}{Lemma}
\newtheorem{cor}[thm]{Corollary}

\newtheorem{claim}[thm]{Claim}

\newtheorem{definition}[thm]{Definition}

\newcommand{\im}{\text{im}}
\newcommand{\ext}{\text{ex}}
\newcommand{\Hom}{\text{Hom}}

\DeclareMathOperator{\OPT}{OPT}
\DeclareMathOperator{\Dist}{Dist}

\renewenvironment{proof}[1][Proof]{\textbf{#1.} }{\ \rule{0.5em}{0.5em}}

\newcommand{\vc}[1]{\ensuremath{\vcenter{\hbox{#1}}}}

\newcommand{\ftwa}{\vc{\includegraphics[page=1, scale = 0.5]{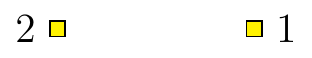}}}
\newcommand{\ftwb}{\vc{\includegraphics[page=2, scale = 0.5]{figs2.pdf}}}

\newcommand{\ftra}{\vc{\includegraphics[page=1, scale = 0.5]{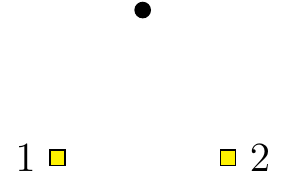}}}
\newcommand{\ftrb}{\vc{\includegraphics[page=2, scale = 0.5]{figs3.pdf}}}
\newcommand{\ftrc}{\vc{\includegraphics[page=3, scale = 0.5]{figs3.pdf}}}
\newcommand{\ftrd}{\vc{\includegraphics[page=4, scale = 0.5]{figs3.pdf}}}
\newcommand{\ftre}{\vc{\includegraphics[page=5, scale = 0.5]{figs3.pdf}}}
\newcommand{\ftrf}{\vc{\includegraphics[page=8, scale = 0.5]{figs3.pdf}}}

\newcommand{\ffra}{\vc{\includegraphics[page=1, scale = 0.5]{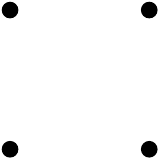}}}
\newcommand{\ffrb}{\vc{\includegraphics[page=2, scale = 0.5]{figs4.pdf}}}
\newcommand{\ffrc}{\vc{\includegraphics[page=3, scale = 0.5]{figs4.pdf}}}
\newcommand{\ffrd}{\vc{\includegraphics[page=4, scale = 0.5]{figs4.pdf}}}
\newcommand{\ffre}{\vc{\includegraphics[page=5, scale = 0.5]{figs4.pdf}}}
\newcommand{\ffrf}{\vc{\includegraphics[page=6, scale = 0.5]{figs4.pdf}}}
\newcommand{\ffrg}{\vc{\includegraphics[page=7, scale = 0.5]{figs4.pdf}}}
\newcommand{\ffrh}{\vc{\includegraphics[page=8, scale = 0.5]{figs4.pdf}}}
\newcommand{\ffri}{\vc{\includegraphics[page=9, scale = 0.5]{figs4.pdf}}}
\newcommand{\ffrj}{\vc{\includegraphics[page=10, scale = 0.5]{figs4.pdf}}}
\newcommand{\ffrk}{\vc{\includegraphics[page=11, scale = 0.5]{figs4.pdf}}}
\newcommand{\figsfourlabelone}{\vc{\includegraphics[page=12, scale = 0.5]{figs4.pdf}}}
\newcommand{\figsfourlabeltwo}{\vc{\includegraphics[page=13, scale = 0.5]{figs4.pdf}}}

\newcommand{\ffiveone}{\vc{\includegraphics[page=1, scale = 0.5]{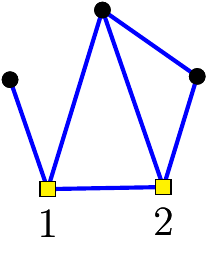}}}

\tikzset{vtx/.style={inner sep=1.7pt, outer sep=0pt, circle, fill,draw}}
\tikzset{gedge/.style={solid,color=black,line width=1.2pt,opacity=0.75}}

\begin{document}

\title{Paths of Length Three are $K_{r+1}$-Tur\'an-Good}
\date{\today}
\author{Kyle Murphy\\
\small{Iowa State University}\\
\small{\texttt{kylm2@iastate.edu}}
 \and 
JD Nir\\
\small{University of Manitoba}\\
\small{\texttt{jd.nir@umanitoba.ca}}
}
\maketitle

\begin{abstract}
The generalized Tur\'an problem $\ext(n,T,F)$ is to determine the maximal number
of copies of a graph $T$ that can exist in an $F$-free graph on $n$ vertices.
Recently, Gerbner and Palmer noted that the solution to the generalized Tur\'an
problem is often the original Tur\'an graph. They gave the name
``$F$-Tur\'an-good'' to graphs $T$ for which, for large enough $n$, the solution
to the generalized Tur\'an problem is realized by a Tur\'an graph. They prove
that the path graph on two edges, $P_2$, is $K_{r+1}$-Tur\'an-good for all $r
\ge 3$, but they conjecture that the same result should hold for all $P_\ell$.
In this paper, using arguments based in flag algebras, we prove that the path on
three edges, $P_3$, is also $K_{r+1}$-Tur\'an-good for all $r \ge 3$.
\end{abstract}

\section{Introduction}

One of extremal graph theory's most celebrated results was introduced in
\cite{Turan} by Tur\'an who asked how many edges a (simple) graph on $n$ vertices can contain if
it has no clique containing $r+1$ vertices. Tur\'an's solution, which we
denote $\ext(n, K_{r+1})$, is asymptotically $(1-\frac{1}{r})\binom{n}{2}$. Additionally, Tur\'{a}n showed that the unique extremal graph is the complete $r$-partite graph on $n$
vertices with parts of size $\lceil \frac{n}{r} \rceil$ or $\lfloor \frac{n}{r}
\rfloor$ (so that no pair of parts differs in size by more than one). We call this
graph the \emph{Tur\'an graph} and denote it $T_r(n)$.

The first extensions to Tur\'an's theorem considered forbidding graphs other
than cliques. For any graph $F$, we say a graph $G$ is \emph{$F$-free} if it
contains no (not necessarily induced) subgraph isomorphic to $F$. We use
$\ext(n,F)$ to denote the maximal number of edges in an $F$-free graph on $n$
vertices. The general case is solved asymptotically by the
Erd\H{o}s-Stone-Simonovits Theorem \cite{ErdosSimonovits} which proves
\[ \ext(n,F) = \left(1-\frac{1}{\chi(F)-1}+o(1)\right)\binom{n}{2}. \]

To further generalize the problem, one may consider counting subgraphs other
than edges. Let $\nu(T,G)$ denote the number of distinct, not necessarily
induced subgraphs of $G$ isomorphic to $T$. We denote by $\ext(n,T,F)$ the
maximum of $\nu(T,G)$ over all $F$-free graphs $G$ on $n$ vertices. (Here $T$ is
the ``target'' graph while $F$ is ``forbidden.'') The first question of this
form to be resolved was due to Zykov in 1949~\cite{Zykov} who determined the
value of the function $\ext(n,K_t,K_r)$ when $t < r$ by proving that the
Tur\'{a}n graph is the unique extremal graph. 

\begin{thm}[Zykov~\cite{Zykov}]\label{thm_zykov_bound}
Let $r$ and $t$ be integers such that $t < r$. Then for all $n$, the Tur\'{a}n
graph $T_t(n)$ is the unique $K_r$-free graph on $n$ vertices containing the
maximum number of $K_t$ subgraphs. 
\end{thm}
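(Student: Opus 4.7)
The plan is to use Zykov's classical symmetrization technique, which reduces the problem to analyzing complete multipartite graphs. Let $G$ be a $K_r$-free graph on $n$ vertices maximizing $\nu(K_t, G)$, and for each vertex $v$ let $n_t(v)$ denote the number of copies of $K_t$ in $G$ containing $v$.

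First, I would show that $G$ may be assumed to be complete multipartite. Given any two non-adjacent vertices $u, v$ with $n_t(u) \geq n_t(v)$, I would perform the standard symmetrization: delete all edges incident to $v$ and reconnect $v$ to the neighborhood of $u$, so that $v$ becomes a false twin of $u$. The resulting graph $G'$ is still $K_r$-free, because any $K_r$ in $G'$ cannot contain both $u$ and $v$ (they are non-adjacent), so if such a clique contains $v$, swapping $v$ for $u$ yields a $K_r$ in $G$, a contradiction. Partitioning $K_t$-copies by which of $u, v$ they use, and observing that no $K_t$ contains both $u$ and $v$, one obtains $\nu(K_t, G') = \nu(K_t, G) - n_t(v) + n_t(u) \geq \nu(K_t, G)$. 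Iterating this procedure (with a monovariant, such as the size of the largest set of mutual false twins, to guarantee termination) yields a graph in which non-adjacency is transitive, hence a complete multipartite graph with parts $V_1, \ldots, V_s$, where $s \leq r-1$ by $K_r$-freeness.

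Next, I would optimize over the part sizes. The number of $K_t$-copies in a complete multipartite graph is the elementary symmetric polynomial $e_t(|V_1|, \ldots, |V_s|)$. A short perturbation argument shows that whenever $|V_i| \geq |V_j| + 2$, moving one vertex from $V_i$ to $V_j$ strictly increases $e_t$, so at the optimum the parts are as balanced as possible. A direct comparison (using that $\binom{s}{t}/s^t$ is increasing in $s$ for $s \geq t$) then shows it is optimal to take $s$ as large as possible, i.e., $s = r-1$, producing the Tur\'an graph.

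The main obstacle is establishing \emph{uniqueness}, since symmetrization only preserves the count weakly. To handle this, I would choose $G$ to be, among all extremizers, one minimizing the number of non-edges, so that no symmetrization step can strictly increase the twin structure without contradicting this minimality; combined with the strict improvement in the balancing step, this forces $G$ itself to be the Tur\'an graph. The delicate accounting in this rigidity step, rather than the symmetrization move itself, is where care is required.
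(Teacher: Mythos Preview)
The paper does not prove this theorem; it is stated as Zykov's classical result and invoked only through Corollary~\ref{cor_zykov_bound}, so there is no in-paper argument to compare your proposal against.

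Your outline is the standard Zykov symmetrization proof and is correct in its main steps. (Note that the extremal graph is $T_{r-1}(n)$, not $T_t(n)$ as the statement is typeset; your argument correctly arrives at $s=r-1$.) The one place I would push back is your device for uniqueness. Choosing an extremizer with the fewest non-edges (i.e.\ the most edges) does not interact cleanly with symmetrization: replacing $v$'s neighborhood by $u$'s changes the edge count by $\deg(u)-\deg(v)$, which can have either sign, so edge-maximality by itself does not rule out a non-multipartite extremizer. The cleaner route is the following. In any extremizer, non-adjacent pairs must have equal $n_t$-values (else a single symmetrization strictly improves). Then, given an induced co-cherry with $u\sim w$ and $v\not\sim u$, $v\not\sim w$, simultaneously symmetrize both $u$ and $w$ to $v$; a short count shows the $K_t$-total changes by exactly the number of $K_t$'s through the edge $uw$, forcing that number to be zero. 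Iteratively deleting all such edges keeps the graph extremal and terminates at a complete multipartite extremizer, which by your strict balancing step equals $T_{r-1}(n)$. Since adding any edge to $T_{r-1}(n)$ creates a $K_r$, no edge was in fact deleted, and the original graph was already Tur\'an.
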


Several sporadic cases were investigated (see, for example,
\cite{BollobasGyori, Gyori}) before 2015 when Alon and Shikhelman introduced a
systematic study in \cite{Alon} in which they determine, among other
results, that for forbidden graphs $F$ with $\chi(F) = k+1 > r$,
\[ \ext(n,K_r,F) = (1+o(1))\binom{k}{r}\left(\frac{n}{k}\right)^r. \]
A more precise result can be found in \cite{MaQiu}. Since then, the area has
been widely studied; see \cite{CutlerNirRadcliffe, GERBNER2019103001, HalfpapPalmer,
LohTaitTimmonsZhou, Luo} for an (incomplete) sampling of authors and results.

As in the original Zykov result, for many choices of $T$ and $F$ the Tur\'an graph
emerges as the optimal graph, at least for large enough $n$. In
\cite{GerbnerPalmer2020}, Gerbner and Palmer introduced the term
$F$-Tur\'an-good to describe such target graphs $T$:

\begin{definition}
Fix an $(r+1)$-chromatic graph $F$ and a graph $T$ that does not contain $F$ as a
subgraph. We say that $T$ is \emph{$F$-Tur\'an-good} if $\ext(n,T,F) =
\nu(T,T_r(n))$ for every $n$ large enough.
\end{definition}

In the same paper, Gerbner and Palmer prove that the path graph on $\ell$ edges,
$P_\ell$, is $K_{r+1}$-Tur\'an-good for $\ell = 2$ and $r \ge 3$. They
conjecture that paths should be Tur\'an-good for all choices of $r$ and $\ell$.
In this paper we establish that $P_3$, the path on three edges, is
$K_{r+1}$-Tur\'an-good for all $r \ge 3$.

To be precise,  define the \emph{density} of $H$ in $G$ to be
\[ d(H,G) = \nu(H,G) \binom{|G|}{|H|}^{-1} \]
and let $\mathcal{F}_{n,r}$ be the family of $K_{r+1}$-free graphs on $n$
vertices. We define
\[ \OPT_r(P_3) = \lim\limits_{n \to \infty} \max_{G_n \in \mathcal{F}_{n,r}} d(P_3,G_n).\]
Then the following theorem is the primary result of this paper:

\begin{thm}\label{main_p4_thm}
For any integer $r \geq 3$,
\begin{enumerate}
    \item[ (i) ] $\OPT_r(P_3) = 12\left(\frac{r-1}{r}\right)^3$.
    \item[ (ii) ] If $n$ is sufficiently large, then $P_3$ is $K_{r+1}$-Tur\'{a}n
good.
\end{enumerate}
\end{thm}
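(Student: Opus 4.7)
The plan is to prove both parts simultaneously via the standard three-stage ``flag algebra plus stability plus exact uniqueness'' template for generalized Turán problems.

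For part (i), the lower bound is a direct calculation: in $T_r(n)$ the number of ordered paths $v_1v_2v_3v_4$ is asymptotically $n \cdot (\tfrac{r-1}{r}n)^3$, so $\nu(P_3,T_r(n)) = (1+o(1))\tfrac{n^4}{2}(\tfrac{r-1}{r})^3$ after dividing by the $P_3$-automorphism factor, and dividing by $\binom{n}{4}$ gives the claimed density $12(\tfrac{r-1}{r})^3$. For the matching upper bound, I would pass to a convergent sequence of $K_{r+1}$-free graphs achieving the supremum, work in the flag algebra of $K_{r+1}$-free graphs, and exhibit a nonnegative combination of squared flags (over types on a small number of labeled vertices, likely $2$ or $3$) together with Cauchy--Schwarz/Zykov-style symmetrization inequalities, which certifies $d(P_3,\cdot) \le 12(\tfrac{r-1}{r})^3$. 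The search for the certificate is typically numerical (an SDP over a small flag basis), but it should admit a clean rounding, because the extremal configuration (a balanced blow-up of $K_r$) is unique and the problem has a large symmetry group.

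For part (ii), I would first extract from the flag algebra proof a \emph{stability} statement: any $K_{r+1}$-free graph $G_n$ with $d(P_3,G_n) = 12(\tfrac{r-1}{r})^3 - o(1)$ must be $o(n^2)$-close in edit distance to $T_r(n)$. This follows from the complementary-slackness structure of the SDP certificate, or alternatively by combining an edge-count bound (since many $P_3$'s force many edges) with the Erd\H{o}s--Simonovits stability theorem. Stability reduces part (ii) to verifying exact optimality inside a small neighbourhood of the Turán graph.

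The third and hardest step is upgrading stability to an exact uniqueness statement for all sufficiently large finite $n$. Here I would take a near-extremal $K_{r+1}$-free $G$ on $n$ vertices, move a small set of ``bad'' vertices to place $G$ into canonical $r$-partite form, and then perform a vertex-by-vertex local analysis: show that (a) any deviation from balanced part sizes strictly decreases $\nu(P_3,\cdot)$, and (b) any missing cross-edge or any extra in-part edge strictly decreases $\nu(P_3,\cdot)$ once $G$ is close enough to $T_r(n)$. The main obstacle is that $P_3$ is not vertex-transitive --- its two endpoints and two internal vertices contribute differently --- so the local exchange argument must weigh several cases, and one must check that even ``neutral-looking'' modifications (for instance, moving a single vertex between two almost-equal parts, or swapping an in-part edge for a cross-edge that creates a forbidden $K_{r+1}$) are actually strictly losing. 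I expect to handle this via a Zykov-style symmetrization in which one replaces a vertex's neighbourhood by a copy of the neighbourhood of another vertex of the same part, arguing that the $P_3$-count weakly increases and becomes strict unless $G = T_r(n)$.
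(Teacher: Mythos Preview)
Your three-stage template (flag certificate, stability from the certificate's slack, local cleanup) is exactly the paper's architecture, and your lower-bound computation and stability-from-slack idea match what the paper does. The paper's stability step is slightly more specific than ``complementary slackness'': the flag certificate is arranged so that every $4$-vertex graph with coefficient strictly below $\OPT_r(P_3)$ contains an induced $\overline{P_2}$, hence near-extremal graphs have $o(n^3)$ induced co-cherries, and the induced removal lemma makes them complete multipartite; the Erd\H{o}s--Simonovits route you mention as an alternative is not used.

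There is one genuine gap in your exact-result sketch. Your item (b) asserts that ``any extra in-part edge strictly decreases $\nu(P_3,\cdot)$'', but this is false on its face: adding an edge can only create copies of $P_3$, never destroy them. In the paper, surplus in-part edges between \emph{typical} vertices are eliminated not by a $P_3$-count but by the $K_{r+1}$-freeness hypothesis directly: if $u,v$ lie in the same part $X_1$ and each has near-complete adjacency to every other part, one greedily picks a common neighbour in $X_2$, then in $X_3$, and so on, producing a forbidden $K_{r+1}$. Only after this structural step does a $P_3$-counting argument enter, and it is applied solely to the $o(n)$ \emph{atypical} (``bad'') vertices: for each such vertex $v$ one shows either that $v$ already has an empty part (so it can be reassigned there, making $G$ genuinely $r$-partite), or that $v$ lies in too few $P_3$'s to survive in an extremal graph --- the latter via the cloning/symmetrization move you describe, comparing $v$ to an average vertex. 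Your items (a) and the Zykov cloning are both present in the paper, but the role of the forbidden clique in killing surplus edges among good vertices is the missing ingredient in your plan.
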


Note that in~\cite{GerbnerPalmer2020}, Gerbner and Palmer provided a proof of
part (i) of Theorem~\ref{main_p4_thm}. Part (ii) is an entirely new result. We will
re-prove part (i) in the language of flag algebras, since we will require this
proof to obtain part (ii).

In~\cite{GERBNER2019103001}, Gerbner and Palmer proved that for two graphs $T$ and $F$, where $\chi(F) = r$, 
\[ \ext(n,T,F) \leq \ext(n,T,K_r) + o(n^{|T|}).\]
Combined with Theorem~\ref{main_p4_thm}, their theorem implies the following corollary.

\begin{cor}
For any graph $F$ with chromatic number $r \geq 3$, 
\[ \ext(n,P_3,F) = \OPT_r(P_3)\binom{n}{4} + o(n^4).\]
\end{cor}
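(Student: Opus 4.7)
The corollary is an immediate consequence of pairing Theorem~\ref{main_p4_thm} with the Gerbner--Palmer comparison bound displayed just above. My plan is to prove it via matching upper and lower bounds.

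For the upper bound, I would apply the Gerbner--Palmer inequality with $T = P_3$ (so $|T| = 4$) to obtain
\[ \ext(n,P_3,F) \leq \ext(n,P_3,K_r) + o(n^4). \]
Theorem~\ref{main_p4_thm}(ii) then identifies the extremal $K$-clique-free graph as the appropriate Tur\'an graph for all sufficiently large $n$, and part (i) evaluates its $P_3$-count as $\OPT_r(P_3)\binom{n}{4} + O(n^3)$. The $O(n^3)$ error absorbs the rounding when the parts of the Tur\'an graph do not divide $n$ evenly, and is itself absorbed into the $o(n^4)$ term coming from the Gerbner--Palmer step.

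For the matching lower bound, I would exhibit the appropriate Tur\'an graph as an explicit $F$-free witness. Since its chromatic number is strictly less than $\chi(F)$, no copy of $F$ can embed into it (any embedding would yield a proper coloring of $F$ with fewer than $\chi(F)$ colors, contradicting the definition of chromatic number). Directly counting $P_3$-subgraphs in this complete multipartite graph produces the matching $\OPT_r(P_3)\binom{n}{4} + O(n^3)$ quantity, and combining the two directions completes the proof.

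I do not anticipate any genuine obstacle: the statement is a direct corollary in the proper sense of the word, not a disguised theorem. The only point requiring care is bookkeeping --- aligning the index conventions across the Gerbner--Palmer bound, the definition of $\OPT_r$, and Theorem~\ref{main_p4_thm} --- so that the Tur\'an graph used for the lower bound is exactly the extremal graph appearing on the right-hand side. Once those conventions are reconciled, the proof is a one-line substitution plus the obvious Tur\'an-graph construction.
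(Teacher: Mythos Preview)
Your proposal is correct and matches the paper's own approach: the paper does not supply a separate proof but simply declares the corollary a consequence of combining Theorem~\ref{main_p4_thm} with the Gerbner--Palmer inequality, which is exactly what you outline. One small remark: you do not actually need part~(ii) of Theorem~\ref{main_p4_thm} --- part~(i) alone already gives $\ext(n,P_3,K_{r}) = \OPT_{r-1}(P_3)\binom{n}{4} + o(n^4)$ directly from the definition of $\OPT$, and your caution about index bookkeeping is well placed, since the $r$ in the corollary and the $r$ in Theorem~\ref{main_p4_thm} are off by one.
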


In the remainder of this section, we establish the conventions used
thorough the paper, reference a few well-known results that
will be of use throughout the proof, and then provide a brief introduction to the
flag algebra method. Section~\ref{sec:part_i_proof} contains the flag algebra
calculations we use to establish part (i) of Theorem~\ref{main_p4_thm}. In
Section~\ref{sec:stability} we establish a stability result, proving that
near-extremal graphs have small edit distance from the Tur\'an graph. Then in
Section~\ref{sec:exact_result} we use that stability argument to show that the
Tur\'an graph is optimal for large enough $n$. We conclude in
Section~\ref{sec:conclusion} with some thoughts on what this result means for
Gerbner and Palmer's conjecture for general paths $P_\ell$.

\subsection{Background and Conventions}

We use $P_\ell$ to denote the path graph with $\ell$ edges and $\ell+1$
vertices. If a copy of $P_3$ in $G$ is defined by the edges $wx$, $xy$ and $yz$,
then we will use $wxyz$ to denote it. Note that a set of four
vertices in $G$ will frequently give multiple distinct copies of $P_3$. We use $wxyz$ for
that specific ordering. 

\begin{figure}[H]
    \centering
    \begin{tikzpicture}
    
    \node[vtx, label = below:$w$] at (0,0) (w) {};
    \node[vtx, label = below:$x$] at (1,0) (x) {};
    \node[vtx, label = below:$y$] at (2,0) (y) {};
    \node[vtx, label = below:$z$] at (3,0) (z) {};
    
    \draw[gedge] (w)--(x) (x)--(y) (y)--(z);
    
    \end{tikzpicture}
    \caption{The path $wxyz$}
\end{figure}
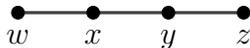

We will need the following corollary of Theorem~\ref{thm_zykov_bound}:
\begin{cor}\label{cor_zykov_bound}
Let $G$ be a $K_{r+1}$-free graph on $n$ vertices. Then 
\[\nu(K_4,G) \leq \frac{r^3 - 6r^2 + 11r - 6}{r^3}\binom{n}{4} + o(n^4)\]
\end{cor}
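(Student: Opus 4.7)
The plan is to derive this directly from Zykov's theorem (Theorem~\ref{thm_zykov_bound}) by computing the number of $K_4$'s in the Tur\'an graph. First I would dispose of the boundary case $r = 3$ separately: the right-hand side factors as $\frac{(r-1)(r-2)(r-3)}{r^3}\binom{n}{4}$, which vanishes at $r = 3$, while simultaneously any $K_4$-free graph $G$ satisfies $\nu(K_4,G) = 0$. So the bound holds trivially in that case, and we may restrict attention to $r \geq 4$.

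For $r \geq 4$, since $4 < r+1$, Theorem~\ref{thm_zykov_bound} (with parameters $t = 4$ and forbidden clique $K_{r+1}$) says that the Tur\'an graph $T_r(n)$ maximizes the number of $K_4$ subgraphs among all $K_{r+1}$-free graphs on $n$ vertices. Thus
\[ \nu(K_4, G) \leq \nu(K_4, T_r(n)), \]
and it remains only to count $K_4$'s in $T_r(n)$ and show the resulting expression matches the claimed bound asymptotically.

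To do that, I would observe that every $K_4$ in $T_r(n)$ is obtained by choosing four of the $r$ parts and then selecting one vertex from each. Because the parts differ in size by at most one, each of the $\binom{r}{4}$ choices contributes $(n/r)^4 + O(n^3)$ copies of $K_4$, so
\[ \nu(K_4, T_r(n)) = \binom{r}{4}\left(\frac{n}{r}\right)^4 + O(n^3). \]
A straightforward manipulation gives $\binom{r}{4}(n/r)^4 = \frac{(r-1)(r-2)(r-3)}{24\, r^3} n^4$, and since $(r-1)(r-2)(r-3) = r^3 - 6r^2 + 11r - 6$ and $\binom{n}{4} = \frac{n^4}{24} + O(n^3)$, the main term is exactly $\frac{r^3 - 6r^2 + 11r - 6}{r^3}\binom{n}{4}$, with the remaining $O(n^3)$ error safely absorbed into the $o(n^4)$ term.

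Since this is a direct application of Zykov's theorem followed by an elementary count in the Tur\'an graph, there is no genuine obstacle; the only step requiring any care is the bookkeeping that converts $\binom{r}{4}(n/r)^4$ into a multiple of $\binom{n}{4}$, and verifying that the slack coming from non-divisibility of $n$ by $r$ is of lower order than $n^4$.
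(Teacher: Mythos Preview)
Your proof is correct and follows essentially the same route as the paper: invoke Zykov's theorem to reduce to counting $K_4$'s in $T_r(n)$, then observe that a $K_4$ requires one vertex from each of four distinct parts, giving $\binom{r}{4}(n/r)^4 + o(n^4)$. You are slightly more careful than the paper in explicitly isolating the $r=3$ case (where Theorem~\ref{thm_zykov_bound} does not directly apply) and in spelling out the algebra that turns $\binom{r}{4}(n/r)^4$ into $\frac{r^3-6r^2+11r-6}{r^3}\binom{n}{4}$, but the argument is the same.
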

\begin{proof}
In the Tur\'{a}n graph $T_r(n)$, any set of four vertices inducing a copy of $K_4$ must come from four different partite sets. Thus there are
\[ \binom{r}{4} \cdot \frac{n^4}{r^4} + o(n^4) \]
copies of $K_4$ in $T_r(n)$. The claim immediately follows. 
\end{proof}

We will also need the following lemma from folklore characterizing multipartite graphs:

\begin{lemma}\label{lem:multipartite_characterization}
Define the \emph{co-cherry} $\overline{P_2}$ to be the unique graph on three
vertices with one edge. Then $G$ is a complete multipartite graph if and only if
it does not contain the co-cherry as an induced subgraph.
\begin{figure}[H]
    \centering
    \begin{tikzpicture}
    
    \node[vtx] at (-1,0) {};
    \node[vtx] at (0,0.5) (a) {};
    \node[vtx] at (0,-0.5) (b) {};
    
    \draw[gedge] (a)--(b);
    
    \end{tikzpicture}
    \caption{The co-cherry}
\end{figure}
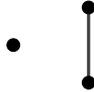
\end{lemma}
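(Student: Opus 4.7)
The forward direction is essentially immediate from the definition. If $G$ is complete multipartite with parts $V_1, \ldots, V_k$, then for any three vertices $a, b, c$ I would case on how they distribute among the parts: if all three lie in distinct parts, the induced subgraph is a triangle; if exactly two lie in the same part, the induced subgraph is $P_2$ (two edges from the third vertex to the pair); and if all three lie in a single part, the induced subgraph is the empty graph on three vertices. None of these is the co-cherry, so no induced $\overline{P_2}$ arises.

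For the converse, I would define a relation $\sim$ on $V(G)$ by $u \sim v$ if and only if $uv \notin E(G)$ (together with $u \sim u$ by convention). The plan is to show that $\sim$ is an equivalence relation and that its equivalence classes serve as the parts of a complete multipartite decomposition of $G$. Reflexivity and symmetry are immediate. For transitivity, suppose $u \sim v$ and $v \sim w$ with $u, v, w$ distinct. If $uw \in E(G)$, then the induced subgraph on $\{u, v, w\}$ has exactly one edge, namely $uw$, and is therefore an induced copy of $\overline{P_2}$, contradicting the hypothesis. Hence $uw \notin E(G)$, so $u \sim w$.

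Let $V_1, \ldots, V_k$ be the equivalence classes of $\sim$. By construction, any two vertices in the same class are non-adjacent. Conversely, if $u$ and $v$ lie in different classes, then $u \not\sim v$, so by the definition of $\sim$ we have $uv \in E(G)$. Thus $G$ is exactly the complete multipartite graph with parts $V_1, \ldots, V_k$, completing the proof.

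There is no real obstacle here; the only subtle point is ensuring transitivity of the non-adjacency relation, which is precisely what the forbidden induced co-cherry encodes. Since the paper attributes this to folklore and uses it only as a structural tool in later arguments, a short, self-contained proof along the lines above is what I would include.
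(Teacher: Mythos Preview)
Your proof is correct and follows essentially the same approach as the paper: both directions match, with the converse in particular defining the non-adjacency relation, verifying it is an equivalence relation via the forbidden induced $\overline{P_2}$, and taking the equivalence classes as the parts. The only cosmetic difference is that for the forward direction the paper argues directly that any three vertices with one non-edge must have a second edge, whereas you enumerate the three possible distributions of vertices among parts; these are equivalent.
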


\begin{proof}
First, assume $G$ is a complete multipartite graph and let $x,y,z \in V(G)$ such
that $x$ is adjacent to $y$ but $z$ is not adjacent to $y$. As $G$ is complete
multipartite, the only way $z$ is not adjacent to $y$ is if they are in the same
vertex class. As $x$ is adjacent to $y$, it must be in a different vertex class.
Thus $x$ and $z$ do not share a vertex class and are adjacent, so $G[\{x,y,z\}]$
does not span a co-cherry.

Now let $G$ be a graph that does not contain the co-cherry as an induced
subgraph. Define a relation on $V(G)$ by $x\sim y$ if $x$ is not
adjacent to $y$. As $G$ is simple, this relation is reflexive and symmetric, and if $x$ is
not adjacent to $y$ and $y$ is not adjacent to $z$, then $x$ cannot be adjacent
to $z$, as that would form an induced co-cherry, so the relation is transitive
as well. Thefore this equivalence relation partitions the vertices of $G$ into
classes which contain no internal edges. Furthermore, two vertices from
different classes are by definition adjacent and thus every edge between
vertex classes is present. We conclude $G$ is complete multipartite.
\end{proof}

\subsection{The Flag Algebra Method}

Flag algebras were introduced by Razoborov~\cite{Razborov} as a tool to
computationally solve problems in extremal combinatorics. In this section, we
will introduce some of the main ideas necessary for our proof. For a complete
overview see \cite{Razborov}. Flag algebras have been applied to study a
variety of extremal problems on graphs~\cite{flag-max-C5, flag-min-C5,
flag-grzesik, flag-hatami, flag-razborov-triangles} and
hypergraphs~\cite{flag-hypergraphs-Falgas-Ravry-Vaughan, flag-hyper-glebov,
flag-hyper-pikhurko}, as well as oriented graphs~\cite{flag-directed-paths,
flag-hladky-kral}. These only represent a handful of the many results in
combinatorics which were obtained using flag algebras.

A \emph{type} $\sigma$ is a graph labelled by $[k]$. An \emph{embedding} of
$\sigma$ into a graph $F$ is an injective map $\theta: [k] \to V(F)$ so that
$\im(\theta)$ is isomorphic to $\sigma$. A \emph{$\sigma$-flag} $(F,\theta)$ is a
graph $F$ together with an embedding $\theta$ of $\sigma$ into $V(F)$. We will let
$\mathcal{F}^{\sigma}$ denote the set of all $\sigma$-flags up to
isomorphism and $\mathcal{F}^{\sigma}_n$ denote the associated subset
containing all $\sigma$-flags on $n$ vertices. If $\sigma$ is the empty graph,
then we will drop it from the notation and simply use $\mathcal{F}$ to denote the set of all graphs, or $\mathcal{F}_n$ to denote the set of all graphs on $n$ vertices. As an example, if
$\sigma^*$ is the following labelled graph on two vertices,
\[ \sigma^* = \ftwb \]
then
\[ \mathcal{F}^{\sigma^*}_3 = \left\{\ftrf,\ftrc,\ftrd,\ftre \right\}. \]

For a type $\sigma$ labelled by $[k]$, two $\sigma$-flags $(H,\theta_1)$ and $(G,\theta_2)$, and a set $X_1$ of size $|V(H)| - k$ selected uniformly at random from $V(G) \setminus \im(\theta_2)$, $P((H,\theta_1),(G,\theta_2))$ is the probability that $X_1 \cup \im(\theta_2)$ is isomorphic to $(H,\theta_1)$.  For completeness, if $|V(G)| < |V(H)|$, then we let $P(H,G) = 0$. If $\sigma$ is the empty graph, then we will write $P(H,G)$ to mean $P((H,\theta_1),(G,\theta_2))$. In this case, the definition of $P(H,G)$ coincides with the standard notion of induced density. Using the same type $\sigma^*$ from the previous example:
\[ \text{If } H = \ftrc \text{ and } G = \ffiveone, \text{ then } P((H,\theta_1),(G,\theta_2)) = \frac{1}{3}.\]

Now suppose that $(J,\theta_3)$ is another $\sigma$-flag. Let $X_1,X_2 \subseteq V(G)$ be two disjoint sets of size $|V(H)| - k$ and $|V(J)| - k$, respectively, selected uniformly at random from $V(G)\setminus \im(\theta_2)$. Then $P((H,\theta_1),(J,\theta_3);(G,\theta_2))$ is the probability that $X_1 \cup \im(\theta_2)$ is isomorphic to $H$ and $X_2 \cup \im(\theta_2)$ is isomorphic to $J$. Once again, if $\sigma$ is empty, then we write $P(H,J;G)$ in place of $((H,\theta_1),(J,\theta_3);(G,\theta_2))$. Equation~\ref{flag_product_O(n)} follows from the definition of $P((H,\theta_1),(J,\theta_3);(G,\theta_2))$.
\begin{equation}\label{flag_product_O(n)}
|P((H,\theta_1),(J,\theta_3);(G,\theta_2)) - P((H,\theta_1),(G,\theta_2))\cdot P((J,\theta_3),(G,\theta_2))| \leq O(|V(G)|^{-1})
\end{equation}
Thus, as the size of $G$ tends toward infinity, we can assume that we select $X_1$ and $X_2$ independently.

Let $\mathbb{R}\mathcal{F}^{\sigma}$ be the set of all finite formal linear combinations of elements from $\mathcal{F}^{\sigma}$. For a given type $\sigma$, let $\mathcal{K^{\sigma}}$ denote the linear subspace of $\mathbb{R}\mathcal{F^{\sigma}}$ generated by all elements of the form 
\[ 
F - \sum\limits_{(H,\theta_2) \in \mathcal{F}_n^{\sigma}} P((F,\theta_1),(H,\theta_2)) \cdot (H,\theta_2) 
\]
where $|V(F)| < n$. Razborov showed that there exists an algebra $\mathcal{A}^{\sigma} = \mathbb{R}\mathcal{F}^{\sigma}/\mathcal{K}^{\sigma}$ with well defined addition and multiplication. Addition is defined in the natural way by adding coefficients. For example, if $F_1,F_2 \in \mathcal{A}^{\sigma^*}$ such that
\[ F_1 = 2 \cdot \ftrd + \ftre \text{ and } F_2 = \ftrc - \ftrd,\]
then
\[ F_1 + F_2 = \ftrc + \ftrd + \ftre.\]
For a fixed type $\sigma$ of size $k$, if $(F_1,\theta_1)$ and $(F_2,\theta_2)$ are two elements in $\mathcal{F}^{\sigma}$ such that 
\[|V(F_1)| + |V(F_2)| - k = n,\] then the product of $F_1$ and $F_2$ is defined as 
\[ (F_1,\theta_1) \cdot (F_2,\theta_2) = \sum\limits_{(H,\theta_3) \in \mathcal{F}^{\sigma}_n} P((F_1,\theta_1),(F_2,\theta_2);(H,\theta_3)) \cdot (H,\theta_3).\]
For example, if
\[(F_1,\theta_1) = \ftrc \text { and } (F_2,\theta_2) = \ftrd \]
then, 
\[ (F_1,\theta_1) \times (F_1,\theta_2) = \frac{1}{2} \cdot \figsfourlabelone + \frac{1}{2} \cdot \figsfourlabeltwo.\]
Observe that the set $\mathcal{F}_4^{\sigma^*}$ contains more than just the two graphs pictured in the previous equation, but in all of these other graphs, $P((F_1,\theta_1),(F_2,\theta_2);(H,\theta_3)) = 0$. Multiplication in $\mathcal{A}^{\sigma}$ is defined as an extension of multiplication in $\mathcal{F}^{\sigma}$.

A sequence of graphs $(G_n)_{n \geq 1}$, where $|V(G_n)| = n$, is said to be \emph{convergent} if for every finite graph $H$, the limit $\lim\limits_{n \to \infty} P(H,G_n)$ exists. Let $\Hom^+(\mathcal{A}^{\sigma},\mathbb{R})$ denote the set of all homomorphisms from $\mathcal{A}^{\sigma}$ to $\mathbb{R}$ such that $\phi(F) \geq 0$ for each element $F \in \mathcal{F}^{\sigma}$. Razborov showed that each function $\phi \in \Hom^+(\mathcal{A}^{\sigma},\mathbb{R})$ corresponds to some convergent graph sequence $(G_n)_{n \geq 1}$. That is, the values of $\phi$ correspond to the limits of induced densities in $(G_n)_{n \geq 1}$. It is often more intuitive to think of addition and multiplication operations in $\mathcal{A}^{\sigma}$ as representing induced densities of subgraphs in some very large graph $G_{n_0}$ with an error term $O(n_0^{-1})$. 

For each type $\sigma$ labelled by $[k]$, Razborov also defined a function $\llbracket \cdot \rrbracket_{\sigma}: \mathbb{R}\mathcal{F}^{\sigma} \to \mathbb{R}\mathcal{F}$, which we will refer to as the \emph{unlabelling operator}. For a $\sigma$-flag $(F,\theta)$, let $q_{\sigma}(F)$ denote the probability that $(F,\theta')$ is isomorphic to $F$, where $\theta ': V(F) \to [k]$ is a randomly chosen injective mapping. Let $F'$ denote the graph isomorphic to $F$ when ignoring labels. Then
\[ \llbracket F \rrbracket_{\sigma} = q_{\theta}(F)F'.\]
As an example,
\[\text{If, } F = \figsfourlabeltwo, \text{ then }  \llbracket F \rrbracket_{\sigma} = \frac{4}{6} \cdot  \ffrg.\]
Finally, it can be shown using the Cauchy-Schwarz inequality that if $\alpha \in \mathcal{A}^{\sigma}$ is some expression and $\phi \in \text{Hom}^+(\mathcal{A},\mathbb{R})$, then 
\begin{equation}\label{eq: C-S for flags}
   \phi\left(\llbracket \alpha^2 \rrbracket_{\sigma}\right) \geq 0. 
\end{equation}

\section{Theorem~\ref{main_p4_thm} (i)}\label{sec:part_i_proof}

First we will prove a lower bound by counting the number of $P_3$ subgraphs in the Tur\'{a}n graph. After that, the remainder of the section will be devoted to proving the upper bound using flag algebras. 

\begin{lemma}\label{lem:opt_lower_bound}
For all $r \ge 3$,
\[ 12\left(\frac{r-1}{r}\right)^3 \leq \OPT_r(P_3) \]
\end{lemma}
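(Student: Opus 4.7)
The plan is to take the Tur\'{a}n graph $T_r(n)$ as our $K_{r+1}$-free witness and evaluate $d(P_3, T_r(n))$ in the limit. Since $T_r(n) \in \mathcal{F}_{n,r}$, this computation immediately lower bounds $\OPT_r(P_3)$ by the definition given in the introduction.

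First I would reduce the count of $P_3$ subgraphs to a count of ordered tuples. A copy of $P_3$ in the (complete $r$-partite) graph $T_r(n)$ is determined by an ordered quadruple of distinct vertices $(w,x,y,z)$ in which consecutive pairs lie in different parts of the $r$-partition; equivalently, $wx$, $xy$, and $yz$ are edges. Because $P_3$ has exactly one nontrivial automorphism, namely the reversal $wxyz \leftrightarrow zyxw$, $\nu(P_3, T_r(n))$ equals one half of the number of such ordered quadruples.

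Next I would count the ordered quadruples greedily. Each part of $T_r(n)$ has size $\lceil n/r \rceil$ or $\lfloor n/r \rfloor$, so every vertex has $\frac{(r-1)n}{r} + O(1)$ neighbors. Picking $x$ first ($n$ options), then $y$ as a neighbor of $x$, then $w$ as a neighbor of $x$ distinct from $y$, and finally $z$ as a neighbor of $y$ distinct from $x$ and $w$, each step has $\frac{(r-1)n}{r} + O(1)$ valid choices. Multiplying yields $\frac{(r-1)^3}{r^3}\, n^4 + O(n^3)$ ordered quadruples. Halving and dividing by $\binom{n}{4} = \frac{n^4}{24} + O(n^3)$ produces
\[
d(P_3, T_r(n)) = \frac{12(r-1)^3}{r^3} + o(1),
\]
and taking $n \to \infty$ delivers the claimed bound on $\OPT_r(P_3)$.

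There is no serious obstacle here: the only care needed is to verify that the $O(n^3)$ corrections coming from the distinctness constraints on the tuple and from the $\pm 1$ imbalance in the part sizes are absorbed by the leading $n^4$ term, which happens automatically upon dividing by $\binom{n}{4}$. The entire argument is a short counting computation once the reduction to ordered quadruples is in place.
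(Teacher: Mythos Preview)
Your proposal is correct. Both you and the paper establish the lower bound by computing $d(P_3, T_r(n))$ and taking $n \to \infty$; the only difference is in the mechanics of the count. The paper fixes the central edge $xy$ first and then splits into cases according to whether the two pendant vertices lie in the two parts already used or in a third (and possibly fourth) part, assembling the total from those cases. Your argument instead counts ordered quadruples $(w,x,y,z)$ with consecutive pairs adjacent by a greedy vertex-by-vertex choice and divides by $|\mathrm{Aut}(P_3)| = 2$. Your version is shorter and avoids the case split, at the minor cost of leaving the lower-order terms implicit; either way the computation is routine and the conclusion is the same.
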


\begin{proof} 
We begin by counting the paths of length three in the Tur\'an graph $T_r(n)$. To
do so, we will first choose the central edge of the path and then select two
additional vertices and describe how to attach them to the central edge.

As the Tur\'an graph is multipartite, the central edge must fall between two
of the $r$ vertex classes. Assume for the moment that $n$ is divisible by $r$.
Then there are $\binom{r}{2}(\frac{n}{r})^2$ choices for the central edge: first
choose two vertex classes and select a vertex from each class.

Now we consider two cases. In the first case, the $P_3$ intersects exactly two
of the vertex classes of $T_r(n)$. In this case, as we have already selected the
central edge, the two vertex classes are already specified and we need only
select an additional vertex from each class. These vertices are each adjacent to
a different vertex of our central edge and thus give a unique $P_3$. There are
$(\frac{n}{r}-1)^2$ ways to choose these two vertices.

In the second case, the $P_3$ intersects at least three vertex classes of
$T_r(n)$. (Note that as vertex classes contain no internal edges, the $P_3$ must
contain vertices from more than one vertex class.) We first select this third
vertex, for which there are $n-2(\frac{n}{r})$ choices, and then select a fourth
unique vertex from the remaining $n-3$ options. If the fourth vertex chosen
happens to share a vertex class with either end of the central edge, then there
is a unique $P_3$ containing the four vertices with the given central edge.
Otherwise, there are two ways to connect the third and fourth vertices to the
central edge. However, we also select pairs of vertices of this form twice as
the fourth vertex we selected was an eligible choice when we selected the third
in this case. Thus either way, this method produces
\[ \left(n-2\left(\frac{n}{r}\right)\right)(n-3) \]
unique copies of $P_3$.

Putting all of our counts together, for all $r \geq 4$, 
\[
\nu(P_3,T_r(n)) = \binom{r}{2}\left(\frac{n}{r}\right)^2\left(
\left(\frac{n}{r} - 1\right)^2 + \left(n - 2\frac{n}{r} \right)(n-3) \right) + o(n^4),
\]
where the error terms accounts for the cases that $n$ is not divisible by $r$.
Factoring out leading leading terms  gives
\[ \nu(P_3,T_r(n)) = n^4 \cdot \frac{1}{2}\left(1 - \frac{1}{r}\right)\left(
\left( \frac{1}{r} - \frac{1}{n}\right)^2 + \left(1 - \frac{2}{r}\right)\left(1
- \frac{3}{n}\right) \right) + o(n^4).\]
As $\binom{n}{4} = \frac{1}{24}n^4 + o(n^4)$, it follows that 
\[ \lim\limits_{n \to \infty} \nu(P_3,T_r(n))\binom{n}{4}^{-1} = \frac{n^4 \cdot \frac{1}{2}\left(1 - \frac{1}{r}\right)\left( \left( \frac{1}{r} - \frac{1}{n}\right)^2 + \left(1 - \frac{2}{r}\right)\left(1 - \frac{3}{n}\right) \right) + o(n^4)}{\frac{1}{24}n^4 + o(n^4)} \] \[ = 12\left(\frac{r-1}{r}\right)^3.\]
Hence, $12\left(\frac{r-1}{r}\right)^3 \leq \OPT_r(P_3)$. 
\end{proof}

We will now prove that $\OPT_r(P_3) \leq 12\left(\frac{r-1}{r}\right)^3$ using the flag algebra method. Unlike many proofs that employ this technique, ours does not require any computer assistance for verification. With that said, this section does require the multiplication and factoring of large polynomials. The authors have included a
link to SageMath code used to verify these calculations in the appendix.

\begin{proof}[Proof of Theorem~\ref{main_p4_thm}(i)]
Let $\mathcal{F}_4 = \{F_i\}_{i = 0}^{10}$ denote the set of all unlabeled
graphs on $4$ vertices up to isomorphism, pictured below.
\begin{figure}[H]
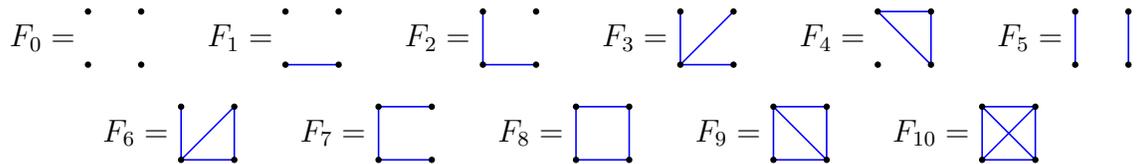

\centering
\[F_0 =  \ffra \hspace{2 em} F_1 = \ffrb \hspace{2 em} F_2 = \ffrc \hspace{2 em} F_3 = \ffrd \hspace{2 em} F_4 =  \ffrj \hspace{2 em} F_5 = \ffrk \] 
\[ F_6 = \ffre \hspace{2 em} F_7 = \ffrf \hspace{2 em} F_8 = \ffrg \hspace{2 em} F_9 = \ffrh \hspace{2 em} F_{10} = \ffri\]
\caption{Enumeration of all graphs in $\mathcal{F}_4$.}
\label{fig:all graphs on four vertices}
\end{figure}
Throughout this section, we will be
working with the induced densities of subgraphs in a convergent
sequence of $K_{r+1}$-free graphs $(G_n)_{n \geq 1}$. In order to simplify
notation we will let $P(F) = \lim\limits_{n \to \infty} P(F,G_n)$ and similarly
$d(F) = \lim\limits_{n \to \infty} d(F,G_n)$. Summing over
all of the graphs on $\mathcal{F}_4$, we observe the following:
\begin{equation}\label{eq_sum_all_in_F4_equal_one}
    \sum\limits_{i = 0}^{10} P(F_i) = 1.
\end{equation}

In order to make expressions like this easier to visualize, we will often use a drawing of $F$ in place of $P(F)$ in our computations. For example, if $(G_n)_{n \geq 1}$ was the sequence of complete graphs on $n$ vertices, then $P(K_4) = \lim\limits_{n \to \infty} P(K_4,G_n) = 1.$ Using a drawing of $K_4$ in order to represent this density, we would write:
\[\ffri = P(K_4) = 1.\]

Fix $r \geq 4$ and let $(G_n)_{n \geq 1}$ be an arbitrary convergent sequence of $K_{r+1}$-free graphs. By the law of total probability, the (non-induced) density of the path $P_3$ can be expressed as the sum of induced densities of graphs on four vertices in the following way, 
\begin{equation}\label{eq_law_total_prob_dens_p3}
d(P_3) = \sum\limits_{i = 0}^{10} P(F_i)\cdot\nu(P_3, F_i).\end{equation}
This expression can be simplified, however, as over half of the graphs in $\mathcal{F}_4$ do not contain a $P_3$ subgraph.
\begin{equation*}
 d(P_3) = \ffrf + 2 \cdot \ffre + 4 \cdot \ffrg + 6 \cdot \ffrh + 12 \cdot \ffri.
\end{equation*}
From Corollary~\ref{cor_zykov_bound} we obtain the following upper bound on $P(K_4)$ in $(G_n)_{n \geq 1}$. 
\begin{equation}\label{eq_zykov flag upper bound}
\ffri \leq \frac{r^3 - 6r^2 + 11r - 6}{r^3}.
\end{equation}
Note that
\begin{align*}
P_0(r) &:= \sum\limits_{i = 0}^{9} \left(\frac{r^3 - 6r^2 + 11r -
6}{r^3}\right) \cdot P(F_i) + \ffri\frac{-6r^2 + 11r - 6}{r^3}\\
	&=  \left(\frac{r^3 - 6r^2 + 11r -
6}{r^3}\right) - \ffri & \text{by \eqref{eq_sum_all_in_F4_equal_one}}\\
	&\ge 0 & \text{by \eqref{eq_zykov flag upper bound}}
\end{align*}

In the following computations, we will use two sets of labeled flags $\mathcal{F}_3^{\sigma_1}$ and $\mathcal{F}_3^{\sigma_2}$, where  
\[ \sigma_1 = \ftwa, \]  
\[\sigma_2 = \ftwb. \]
By the Cauchy-Schwarz inequality, each of the following three expressions is nonnegative for all $r \geq 4$.  

\begin{enumerate}
    \item $P_1(r) = 6 \cdot \left\llbracket \left( (r-1)\ftra - \ftrb \right)^2 \right\rrbracket_{\sigma_1} = $ 
    $$ (6r^2 - 12r + 6) \cdot \ffra + (r^2 - 2r +1) \cdot \ffrb + (1-r) \cdot \ffrc + (3-3r) \cdot \ffrd  + 2 \cdot \ffrg + \ffrh$$
    \item $P_2(r) = 6 \cdot \left\llbracket \left(\ftrc - \ftrd\right)^2 \right\rrbracket_{\sigma_2} = $
    $$ 3\ffrd + \ffrf - \ffre - 4\ffrg $$
    \item $P_3(r) = 6 \cdot \left\llbracket \left( (r-2)\ftrc + (r-2)\ftrd - 2\ftre \right)^2 \right\rrbracket_{\sigma_2} = $
    $$(3r^2 - 12r + 12) \cdot \ffrd + (r^2 - 8r +12) \cdot \ffrf + (r^2 - 6r + 12) \cdot \ffre + (4r^2 - 16r + 16) \cdot \ffrg $$
    $$+ (20 - 8r) \cdot \ffrh + 24 \cdot \ffri $$
\end{enumerate}
Moreover, It can quickly verified that for all $r \geq 4$, the following polynomials are all nonnegative.
\begin{enumerate}
    \item $p_0(r) = \frac{18(r^2 - 2r + 1)}{3r^2 - 11r + 9}$
    \item $p_1(r) = \frac{3r^3 - 10r^2 + 7r}{3r^5 - 11r^4 + 9r^3}$
    \item $p_2(r) = \frac{9r^5 - 32r^4 + 25r^3}{4(3r^5 - 11r^4 + 9r^3)}$
    \item $p_3(r) = \frac{15r^3 - 24r^2 + 7r}{4(3r^5 - 11r^4 + 9r^3)}$
\end{enumerate}
We can add the sum $\sum\limits_{j=0}^{3} p_j(r)P_j(r)$ to Equation~\eqref{eq_law_total_prob_dens_p3} to obtain the following upper bound on $d(P_3)$. 
\begin{equation}\label{eq unsimplified final flag path bound}
    d(P_3) \leq \sum\limits_{i = 0}^{10} P(F_i)\cdot\nu(P_3, F_i) + \sum\limits_{j=0}^{3} p_j(r)P_j(r).
\end{equation}
For each $F_i \in \mathcal{F}_4$, let $C_{F_i}$ denote the coefficient of the graph $F_i$ after combining like-terms in Equation~\eqref{eq unsimplified final flag path bound}.
This gives the following, simplified upper bound on $d(P_3)$.
\[
d(P_3) \leq \sum\limits_{i = 0}^{10} C_{F_i}P(F_i).
\]
Since $\sum\limits_{i = 0}^{10} P(F_i) = 1$, it follows that 
\begin{equation}\label{eq: showing that density is less than max coefficient}
d(P_3) \leq \max\{ C_{F_i} : F_i \in \mathcal{F}_4 \}.
\end{equation}
The following are the exact values of each $C_{F_i}$.

\begin{itemize}
    \item $C_{F_0} = C_{F_3} = C_{F_8} = C_{F_9} = C_{F_{10}} = $ 
    \[ 
    12\left(\frac{r-1}{r}\right)^3
    \]
    \item $C_{F_1} =$
    \[
    \frac{(21r^2 - 97r + 108)(r - 1)^3}{3r^5 - 11r^4 + 9r^3}
    \]
    \item $C_{F_2} =$
    \[ \frac{(18r^3 - 111r^2 + 205r - 108)(r - 1)^2}{3r^5 - 11r^4 + 9r^3}
    \]
    \item $C_{F_4} = C_{F_5} = $
    \[ \frac{18(r - 1)^3(r - 2)(r - 3)}{3r^5 - 11r^4 + 9r^3}
    \]
    \item $C_{F_6} =$
    \[
    \frac{45r^5 - 351r^4 + 1035r^3 - 1389r^2 + 870r - 216}{2(3r^5 - 11r^4 + 9r^3)}
    \]
    \item $C_{F_7} =$
    \[ 
    \frac{(30r^4 - 180r^3 + 371r^2 - 327r + 108)(r - 1)}{3r^5 - 11r^4 + 9r^3}
    \]
\end{itemize}
By examining leading coefficients and factoring, it is clear that for all $r > 1000$,
\begin{equation}\label{eq:factoring to show OPT is what we want}
\max\{ C_{F_i} : F_i \in \mathcal{F}_4 \} = 12\left(\frac{r-1}{r}\right)^3.
\end{equation}
We have provided a link for SageMath code which can be used to verify \eqref{eq:factoring to show OPT is what we want} for $4 \leq r \leq 1000$. 
This fact, together with Equation~\ref{eq: showing that density is less than max coefficient} are enough to show that 
\[ \OPT_r(P_3) \leq 12\left(\frac{r-1}{r}\right)^3.\]
Along with Lemma~\ref{lem:opt_lower_bound}, this completes the proof of Theorem~\ref{main_p4_thm}(i). \end{proof}

\section{Stability}\label{sec:stability}
For two graphs $G$ and $H$ of the same order, the \emph{edit distance} between
$G$ and $H$, denoted $\Dist(G,H)$, is the minimum number of adjacencies
one needs to add or remove in order to change $G$ into a graph isomorphic to
$H$. Our goal in this section is to prove that graphs with $P_3$ density approaching
$\OPT_r(P_3)$ are close in structure to the Tur\'an graph $T_r(n)$.
Specifically, we prove the following lemma:

\begin{lemma}\label{lem:actual_stability}
For every $\varepsilon > 0$, there exists an $n_0$ and $\delta > 0$ such
that for every $K_{r+1}$-free graph $G$ of order $n \geq n_0$, if $d(P_3,G) \geq
\OPT_r(n) - \delta$, then $\Dist(G,T_r(n)) \leq \varepsilon n^2$. 
\end{lemma}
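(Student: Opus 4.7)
The plan is to extract structural information from the flag algebra inequality established in the proof of Theorem~\ref{main_p4_thm}(i). Examining the explicit formulas for the coefficients $C_{F_i}$, we see that $C_{F_i} = 12\left(\tfrac{r-1}{r}\right)^3$ occurs precisely for the five complete multipartite graphs on four vertices ($F_0, F_3, F_8, F_9, F_{10}$), while for the remaining six indices $i \in \{1,2,4,5,6,7\}$ the quantity $12\left(\tfrac{r-1}{r}\right)^3 - C_{F_i}$ is strictly positive; write $\eta(r) > 0$ for its minimum over these six indices (which is positive for every $r \geq 4$ by the explicit formulas and the Sage verification already used in part (i)). Combined with $\sum_i P(F_i, G) = 1$, the flag algebra bound $d(P_3, G) \leq \sum_i C_{F_i} P(F_i, G) + o(1)$ forces
\[ \sum_{i \in \{1,2,4,5,6,7\}} P(F_i, G) \leq \frac{\delta + o(1)}{\eta(r)} \]
whenever $d(P_3, G) \geq \OPT_r(P_3) - \delta$.

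By Lemma~\ref{lem:multipartite_characterization}, a 4-vertex graph is complete multipartite if and only if it contains no induced co-cherry, so every non-multipartite $F_i$ above contributes at least one induced $\overline{P_2}$ among its four 3-subsets, while the multipartite $F_i$ contribute none. Thus the induced density of $\overline{P_2}$ in $G$ is bounded by a nonnegative linear combination of those six terms and can be made arbitrarily small by shrinking $\delta$ and taking $n$ large. Applying the induced-subgraph removal lemma of Alon, Fischer, Krivelevich, and Szegedy to $\overline{P_2}$: for any $\varepsilon_1 > 0$ there is a threshold $\gamma_0 > 0$ so that if the induced $\overline{P_2}$ density in $G$ is below $\gamma_0$, then $G$ can be transformed into some $G^*$ free of induced co-cherries by changing at most $\varepsilon_1 n^2$ adjacencies. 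By Lemma~\ref{lem:multipartite_characterization} applied in reverse, $G^*$ is complete multipartite; write its parts as $n_1 \geq n_2 \geq \cdots \geq n_k$.

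It remains to show that $G^*$ itself is close to $T_r(n)$. Since $\Dist(G, G^*) \leq \varepsilon_1 n^2$ and $G$ is $K_{r+1}$-free, the number of $K_{r+1}$-copies in $G^*$ is at most $\varepsilon_1 n^2 \cdot \binom{n-2}{r-1} \leq \varepsilon_1 n^{r+1}$. A counting argument on the elementary symmetric polynomial $e_{r+1}(n_1/n, \ldots, n_k/n)$ shows that at most $r$ parts of $G^*$ can exceed size $\varepsilon_1^{1/(r+1)} n$, and the remaining "small" parts have total mass that tends to zero with $\varepsilon_1$; merging those small parts into the large ones costs $o(n^2)$ additional edits and reduces to the case where $G^*$ is complete $r$-partite. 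The asymptotic $P_3$ density of a complete $r$-partite graph with part fractions $p_1, \ldots, p_r$ (summing to $1$) is a symmetric polynomial strictly maximized at $p_i = 1/r$, so since $d(P_3, G^*) = d(P_3, G) + o(1)$ is within $o(1)$ of $12\left(\tfrac{r-1}{r}\right)^3$, the parts of $G^*$ must be within $\tfrac{\varepsilon}{3r} n$ of balanced. A final $O(\varepsilon) n^2$ edits yield a graph isomorphic to $T_r(n)$, and the triangle inequality gives $\Dist(G, T_r(n)) \leq \varepsilon n^2$ for suitable choices of the intermediate constants.

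The main obstacle is the last step: quantitatively controlling the part-size structure of $G^*$ from the combination of "near-maximal $P_3$ density" and "few $K_{r+1}$-copies." This amounts to a stability statement for $d(P_3)$ on complete multipartite graphs, requiring either direct polynomial analysis on the simplex of part-fractions or a compactness argument in the space of graphons. By contrast, the flag algebra slack and the removal-lemma steps are essentially routine given the explicit $C_{F_i}$ formulas already computed in part (i).
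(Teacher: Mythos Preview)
Your first two steps—extracting slack from the flag algebra inequality to bound the induced co-cherry density, then applying the induced removal lemma—are exactly what the paper does (its Lemmas~B and~A). The divergence comes in how you pass from ``complete multipartite'' to ``nearly balanced $r$-partite.''

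The paper takes a shorter route here than you do. It applies the removal lemma to the family $\{\overline{P_2},\,K_{r+1}\}$ simultaneously: since $G$ has zero copies of $K_{r+1}$ and $o(n^3)$ induced co-cherries, one obtains a graph $G'$ that is both $\overline{P_2}$-free and $K_{r+1}$-free, hence automatically complete $r$-partite (possibly with empty parts). This completely sidesteps your $e_{r+1}$ argument and the whole discussion of ``small parts.'' From there the paper shows the parts are nearly balanced not by compactness but by an explicit vertex-moving computation: the formula for $\Delta_{P_3}$ derived in Lemma~\ref{lem:rpartite_turan_best} lets one show quantitatively that a part of size outside $[(1-\varepsilon_C)n/r,\,(1+\varepsilon_C)n/r]$ would allow $\Theta(n^4)$ additional $P_3$'s, contradicting near-optimality.

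Your route—removal lemma on $\overline{P_2}$ only, then arguing few $K_{r+1}$'s in $G^*$, then structural control—does work, but one of your intermediate claims is under-justified. You assert that ``the remaining small parts have total mass that tends to zero with $\varepsilon_1$'' as a consequence of $e_{r+1}\le\varepsilon_1$ alone. This is true but not immediate: when there are fewer than $r$ ``large'' parts one can arrange the small parts so that $e_{r+1}$ remains tiny while the small mass stays a fixed constant—unless one also uses that each small part is below the threshold $\varepsilon_1^{1/(r+1)}$ and does a short case analysis (or invokes compactness, which you do list as a fallback). So the claim needs a paragraph of work, not a line. Once that is in place, your compactness argument on the simplex for ``unique strict maximum at balanced $\Rightarrow$ near-max implies near-balanced'' is a legitimate substitute for the paper's explicit $\Delta_{P_3}$ bounds.

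In short: the strategy is the same as the paper's; the paper's execution is cleaner because it folds $K_{r+1}$ into the removal step. If you make that one change, your $e_{r+1}$/small-mass paragraph becomes unnecessary and the remaining stability-on-the-simplex step is the only thing left to quantify.
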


We prepare for the proof of Lemma~\ref{lem:actual_stability} with a collection of
lemmas. Several of these lemmas use the epsilon-delta paradigm, and so in the
interest of legibility we have labelled the lemmas in this section by letter. We
adopt the convention that $\varepsilon_A$, for example, will always refer to the
$\varepsilon$ in Lemma A. The exception to this rule is
Lemma~\ref{lem:actual_stability} which uses unadorned variables.

The first lemma is the Induced Removal Lemma, proved by Alon, Fischer, Krivelevich and
Szegedy~\cite{Alon_efficienttesting_removlemma}.

\begin{lemma}[Lemma A, Induced Removal Lemma]\label{lem:removal_lemma}
Let $\mathcal{F}$ be a set of graphs. For each $\varepsilon_A > 0$, there exist
$\eta_A$ and $\delta_A > 0$ such that for every graph $G$ of order $n \geq
\eta_A$, if $G$ contains at most $\delta_A n^{|V(H)|}$ induced copies of $H$ for every
$H \in \mathcal{F}$, then $G$ can be made $\mathcal{F}$-free by removing or
adding at most $\varepsilon_A n^2$ edges from $G$. 
\end{lemma}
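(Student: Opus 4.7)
The plan is to prove the Induced Removal Lemma via the strong regularity lemma of Alon, Fischer, Krivelevich and Szegedy, which produces for any tolerance sequence a Szemer\'edi partition $\mathcal{P}$ together with a refinement $\mathcal{Q}$ that is extremely regular relative to $\mathcal{P}$: for almost all pairs $(V_i, V_j)$ of $\mathcal{P}$, the induced pair densities on the refined blocks inside $V_i \times V_j$ match $d(V_i, V_j)$ up to a very small additive error, and the refined pairs are $\varepsilon'$-regular for an $\varepsilon'$ that may be chosen as small a function of the coarse parameters as desired. Fix $\varepsilon_A > 0$ and let $h = \max_{H \in \mathcal{F}} |V(H)|$. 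Choose auxiliary constants $\gamma \ll \varepsilon_A$ and $\varepsilon' \ll \gamma$ (depending on $h$ and $|\mathcal{F}|$), and apply the strong regularity lemma to $G$ with these parameters to produce $(\mathcal{P}, \mathcal{Q})$.

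Next I would define a cleaned graph $G'$ by performing the standard trio of modifications on the coarse partition $\mathcal{P}$: delete all edges inside any single part $V_i$; delete all edges across any irregular coarse pair; and for each pair $(V_i,V_j)$ with density $d_{ij} < \gamma$ delete every edge across the pair, while for each pair with $d_{ij} > 1-\gamma$ add every non-edge across the pair. A routine accounting shows $|E(G) \triangle E(G')| \le \varepsilon_A n^2$ when $\gamma$ and $\varepsilon'$ are small enough relative to $\varepsilon_A / k$, where $k = |\mathcal{P}|$.

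Then I would argue by contrapositive. Suppose $G'$ is not $\mathcal{F}$-free, so it contains some induced $H \in \mathcal{F}$ on vertices $u_1,\ldots,u_t$ lying in coarse parts $V_{i_1},\ldots,V_{i_t}$. By construction of $G'$, every coarse pair $(V_{i_a},V_{i_b})$ used has density bounded away from $0$ and $1$ and is regular in the refined sense provided by $\mathcal{Q}$. I would then invoke an induced counting/embedding lemma in the refined partition to produce at least $c(\varepsilon',\gamma,h) \cdot \prod_a |V_{i_a}| = \Omega(n^{|V(H)|})$ induced copies of $H$ in $G$ itself, choosing a single representative refined block inside each $V_{i_a}$ whose bipartite and co-bipartite densities simultaneously meet their targets. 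Setting $\delta_A$ strictly smaller than this constant then contradicts the hypothesis that $G$ contains at most $\delta_A n^{|V(H)|}$ induced copies of $H$, completing the proof.

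The main obstacle is the induced counting step. Plain $\varepsilon$-regularity suffices to count non-induced copies by multiplying edge densities, but an induced embedding must simultaneously certify that each prescribed edge appears \emph{and} each prescribed non-edge is absent, which forces one to control the density and the co-density on the same pair of blocks at once. A single $\varepsilon$-regular pair does not in general support both counts simultaneously; this is exactly what the refinement $\mathcal{Q}$ buys us, and it is the reason the proof needs the strong regularity lemma rather than its classical form. Tracking the constants through this refined embedding argument gives the claimed dependence $\delta_A = \delta_A(\varepsilon_A, \mathcal{F})$ and $\eta_A = \eta_A(\varepsilon_A, \mathcal{F})$, with the inevitable tower-type quantitative weakness inherited from regularity.
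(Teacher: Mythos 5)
The paper does not prove this lemma at all: it is imported verbatim from Alon, Fischer, Krivelevich and Szegedy \cite{Alon_efficienttesting_removlemma} and used as a black box, so there is no in-paper argument to compare yours against. Your outline is a recognizable and essentially faithful summary of the actual AFKS proof --- strong regularity lemma, cleaning on the coarse partition, and an induced embedding in the refined partition --- and you correctly identify why the classical regularity lemma does not suffice (the need to certify non-edges as well as edges simultaneously). For the application in this paper, note also that $\mathcal{F}$ is finite ($\{K_{r+1},\overline{P_2}\}$), so your implicit assumption $h=\max_{H\in\mathcal{F}}|V(H)|<\infty$ is harmless, though the lemma as stated allows arbitrary $\mathcal{F}$ and the infinite case requires the later Alon--Shapira machinery.

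As a standalone proof, however, your proposal has a genuine gap exactly where you flag ``the main obstacle'': the induced counting step is invoked, not proved, and it is the technical heart of AFKS. Two points need repair. First, your cleaning rule rounds a coarse pair to empty or complete according to its coarse density $d_{ij}$, but the embedding must ultimately produce induced copies in $G$, not $G'$; for that, the rounding decision has to be made according to the density of the chosen \emph{representative refined blocks} (which the strong lemma guarantees is close to $d_{ij}$), and one must check that a pair rounded to empty is only ever asked to supply non-edges of $H$ (and dually for complete pairs), while intermediate pairs supply both via the refined regularity. Second, your claim that ``every coarse pair used has density bounded away from $0$ and $1$'' is false as stated --- rounded pairs have density $0$ or $1$ in $G'$ --- and the argument survives only because of the previous point. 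Writing out the induced counting lemma for $\varepsilon'$-regular refined blocks with densities in $[\gamma,1-\gamma]$ (or trivially $0$/$1$), and verifying the constant $c(\varepsilon',\gamma,h)$ is positive, is where all the work lies; without it the proposal is a plan rather than a proof. Since the paper itself simply cites the result, the pragmatic resolution is to do the same.
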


We define the set $T$ to contain all of the graphs $F \in
\mathcal{F}_4$ for which $c_F = \OPT_r(P_3)$ in the proof of Theorem~\ref{main_p4_thm}. 
\[
T = \left\{ \hspace{ 1 em} \ffra, \hspace{2 em} \ffrd, \hspace{2 em} \ffrg, \hspace{2 em} \ffrh, \hspace{2 em} \ffri \hspace{1 em }\right\}.
\]
The following is a restatement of Lemma 2.4.3 appearing in~\cite{baberthesis}. For completeness, we will provide a short proof.
\begin{lemma}{\cite{baberthesis}}\label{lem:baber thesis}
Let $(G_n)_{n \geq 1}$ be a sequence of $K_{r+1}$-free graphs such that 
\[ 
\lim\limits_{n \to \infty} d(P_3,G_n) = \lim\limits_{n \to \infty} \sum\limits_{i=0}^{10} C_{F_i} \cdot P(F_i,G_n) =  \OPT_r(P_3),
\]
where $F_i \in \mathcal{F}_4$ for all $i = 0,\dots, 10$. Then for all $F \in \mathcal{F}_4$, $\lim\limits_{n \to \infty} P(F,G_n) > 0$ implies that $F \in T$.
\end{lemma}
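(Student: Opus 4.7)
The plan is to read off the result directly from the chain of inequalities already assembled in the proof of Theorem~\ref{main_p4_thm}(i). There we established, for every $K_{r+1}$-free sequence,
\[ d(P_3, G_n) \;\leq\; \sum_{i = 0}^{10} C_{F_i}\, P(F_i, G_n) \;\leq\; \OPT_r(P_3), \]
where the first inequality is obtained by adding the non-negative slack $\sum_{j=0}^{3} p_j(r)\, P_j(r)$ to the identity~\eqref{eq_law_total_prob_dens_p3}, and the second inequality uses $\sum_i P(F_i, G_n) = 1$ together with the fact---made explicit by the factoring and leading-coefficient analysis at the end of that proof---that $C_{F_i} \leq \OPT_r(P_3)$ with equality holding precisely on the graphs in $T$.

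Next I would pass to the limit. Write $\pi_i = \lim_{n \to \infty} P(F_i, G_n)$, which exists by the assumed convergence. Because $\{F_i\}_{i=0}^{10}$ enumerates all isomorphism classes on four vertices, the normalization $\sum_i \pi_i = 1$ carries over, and the hypothesis of the lemma directly supplies
\[ \sum_{i=0}^{10} C_{F_i}\, \pi_i \;=\; \OPT_r(P_3). \]
Thus $\OPT_r(P_3)$ is realized as a convex combination of the values $\{C_{F_i}\}_{i=0}^{10}$ with weights $\pi_i$.

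The argument concludes by the elementary observation that a convex combination of real numbers equals their maximum only when zero weight is placed on every strictly sub-maximal entry. Since the calculation in the proof of Theorem~\ref{main_p4_thm}(i) gives $C_{F_i} = \OPT_r(P_3)$ exactly when $F_i \in T$ and $C_{F_i} < \OPT_r(P_3)$ for every other $F_i \in \mathcal{F}_4$, we are forced to conclude $\pi_i = 0$ whenever $F_i \notin T$, which is precisely the claim. There is essentially no technical obstacle here: all the heavy lifting (the flag-algebra bounds, the polynomial multipliers, and the strict comparisons $C_{F_i} < \OPT_r(P_3)$ for $F_i \notin T$) has already been carried out in Section~\ref{sec:part_i_proof}, and this lemma is simply the complementary-slackness statement associated with that extremal computation. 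The only bookkeeping item worth double-checking is that the strict inequality $C_{F_i} < \OPT_r(P_3)$ for $F_i \notin T$ holds for every $r \geq 4$, which is exactly what \eqref{eq:factoring to show OPT is what we want} (together with the accompanying SageMath verification for $4 \leq r \leq 1000$) asserts.
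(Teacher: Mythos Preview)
Your proposal is correct and follows essentially the same approach as the paper: both argue that since $\sum_i \pi_i = 1$, $\sum_i C_{F_i}\pi_i = \OPT_r(P_3)$, and $C_{F_i} < \OPT_r(P_3)$ precisely when $F_i \notin T$, any positive weight on a graph outside $T$ would force the weighted sum strictly below $\OPT_r(P_3)$. Your phrasing in terms of a convex combination attaining its maximum is a slightly cleaner packaging of the same complementary-slackness idea the paper uses via contrapositive.
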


\begin{proof}
Let $\mathcal{F}_4^*$ denote the set of graphs $F$ in $\mathcal{F}_4$ for which $\lim\limits_{n \to \infty} P(F,G_n) > 0$. Then $
\lim\limits_{n \to \infty}\sum_{F \in \mathcal{F}_4^*} P(F,G_n) = 1,
$
implying  from Theorem~\ref{main_p4_thm}(i) that
\[ \lim\limits_{n \to \infty}\sum\limits_{F \in \mathcal{F}_4^*} C_F \cdot P(F,G_n) = \OPT_r(P_3).
\]
For each graph $H \in \mathcal{F}_4 \setminus T$, we know from the proof of Theorem~\ref{main_p4_thm}(i) that $C_H < \OPT_{r}(P_3)$. Thus, $H \notin \mathcal{F}_4^*$ as otherwise $\lim\limits_{n \to \infty}\sum_{F \in \mathcal{F}_4^*} C_F \cdot P(F,G_n) < \OPT_r(P_3).$
\end{proof}

Given the fact that only those graphs in $T$ can appear with positive density in the limit of any extremal sequence, we can now prove the following lemma.

\begin{lemma}[Lemma B]\label{lem:tight_graphs}
For each $\varepsilon_B > 0$, there exists a $\eta_B$ and $\delta_B > 0$ such
that any $K_{r+1}$-free graph $G$ of order $n \ge \eta_B$ satisfying $d(P_3,G)
\geq \OPT_r(P_3) - \delta_B$ contains at most $\varepsilon_B n^3$ copies of
$\overline{P_2}$. 
\end{lemma}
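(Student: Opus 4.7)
The plan is to argue by contradiction via a compactness reduction to Lemma~\ref{lem:baber thesis}. The central observation I would make first is that the extremizer set $T = \{F_0, F_3, F_8, F_9, F_{10}\}$ consists of exactly the five complete multipartite graphs on four vertices: the empty graph, the claw $K_{1,3}$, the four-cycle $C_4 = K_{2,2}$, the graph $K_4 - e = K_{1,1,2}$, and $K_4 = K_{1,1,1,1}$. These correspond, respectively, to the vertex-class distributions $(4)$, $(3,1)$, $(2,2)$, $(2,1,1)$, and $(1,1,1,1)$ of four vertices inside $T_r(n)$. Combined with Lemma~\ref{lem:multipartite_characterization}, this structural identification immediately gives that no graph in $T$ contains an induced copy of $\overline{P_2}$.

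With this in hand, I would express the co-cherry density as a linear combination of the $4$-vertex densities. Sampling a random $3$-subset of $V(G)$ by first choosing a uniform $4$-subset $S$ and then dropping one of its four vertices uniformly yields the identity
\[
P(\overline{P_2},G) \;=\; \tfrac{1}{4}\sum_{i=0}^{10} c_i\, P(F_i,G),
\]
where $c_i$ denotes the number of induced copies of $\overline{P_2}$ inside $F_i$. By the structural observation, $c_i = 0$ whenever $F_i \in T$, so only the ``non-tight'' graphs $F_1, F_2, F_4, F_5, F_6, F_7$ can contribute to the right-hand side.

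To close the argument, suppose the lemma fails. Then there exist a fixed $\varepsilon_B > 0$ and a sequence $(G_n)$ of $K_{r+1}$-free graphs with $|V(G_n)| \to \infty$, $d(P_3, G_n) \to \OPT_r(P_3)$, and more than $\varepsilon_B n^3$ induced copies of $\overline{P_2}$, so $P(\overline{P_2}, G_n) \ge 6\varepsilon_B$ for all large $n$. By a standard diagonal compactness argument I may pass to a subsequence along which every density $P(F, G_n)$ converges. Lemma~\ref{lem:baber thesis} then forces $\lim_{n\to\infty} P(F_i, G_n) = 0$ for every $F_i \notin T$; substituting into the identity above yields $\lim_{n\to\infty} P(\overline{P_2}, G_n) = 0$, contradicting the lower bound.

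The only non-routine step in this plan is the structural identification of $T$ with the four-vertex complete multipartite graphs, which can be read off directly from the Tur\'an-graph counting in Lemma~\ref{lem:opt_lower_bound}; after that, the argument is essentially bookkeeping combined with Lemmas~\ref{lem:multipartite_characterization} and~\ref{lem:baber thesis}. I expect the same template---writing the density of an unwanted induced substructure as a linear combination of $4$-vertex densities and invoking Lemma~\ref{lem:baber thesis}---to be reused when upgrading from ``few co-cherries'' to the full edit-distance statement of Lemma~\ref{lem:actual_stability} via the Induced Removal Lemma~\ref{lem:removal_lemma}.
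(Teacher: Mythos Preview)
Your proposal is correct and follows essentially the same approach as the paper: both arguments reduce to Lemma~\ref{lem:baber thesis} together with the observation that no graph in $T$ contains an induced $\overline{P_2}$, and then pass from the limit statement to the $\varepsilon$--$\delta$ form via a contradiction/compactness argument. Your write-up is more explicit than the paper's (you identify $T$ structurally as the complete multipartite $4$-vertex graphs and spell out the subsequence extraction, whereas the paper just says ``by inspection'' and ``immediately implies''), but the logic is identical.
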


\begin{proof}
Suppose that $(G_n)_{n \geq 1}$ is some convergent sequence of $K_{r+1}$-free graphs for which 
\[ 
\lim\limits_{n \to \infty} d(P_3,G_n) = \OPT_r(P_3).
\]
By inspection, none of the graphs in $T$ contain $\overline{P_2}$ as a subgraph. Thus from Lemma~\ref{lem:baber thesis},
\[ \lim\limits_{n \to \infty} d(\overline{P_2},G_n) = 0.\]
This fact immediately implies Lemma~\ref{lem:tight_graphs}.
\end{proof}

Next we prove that among all complete $r$-partite graphs on at least four
vertices, the Tur\'{a}n graph $T_r(n)$ contains the most $P_3$ subgraphs.

\begin{lemma}\label{lem:rpartite_turan_best}
For $n \geq 4$ and $r \geq 4$, if $G$ is any complete $r$-partite graph on $n$
vertices then $\nu(P_3,G) \leq \nu(P_3,T_r(n)).$
\end{lemma}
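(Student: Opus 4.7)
The plan is to use a smoothing argument. Given a complete $r$-partite graph $G$ with part sizes $n_1, \ldots, n_r$, I will show that if some pair of parts satisfies $n_i \ge n_j + 2$, then moving a single vertex from $V_i$ to $V_j$ yields a complete $r$-partite graph $G'$ with $\nu(P_3, G') \ge \nu(P_3, G)$. Iterating this operation terminates (since $\sum_k n_k^2$ strictly decreases at each step) at a graph with all part sizes differing by at most one---that is, the Tur\'an graph $T_r(n)$---so $\nu(P_3, G) \le \nu(P_3, T_r(n))$.

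The first step is to express $\nu(P_3, G)$ cleanly in terms of the part sizes. Using the standard identity $\nu(P_3, H) = \sum_{xy \in E(H)}[(\deg x - 1)(\deg y - 1) - |N(x) \cap N(y)|]$---obtained by counting ordered $P_3$'s via their middle edge---and specializing to a complete multipartite graph (where $\deg v = n - n_{i(v)}$ and $|N(x) \cap N(y)| = n - n_{i(x)} - n_{i(y)}$ for every edge $xy$), one obtains
\[
\nu(P_3, G) = \sum_{i < j} n_i n_j \bigl[(n^2 - 3n + 1) - (n-2)(n_i + n_j) + n_i n_j\bigr].
\]

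For the smoothing step, assume without loss of generality that $n_1 \ge n_2 + 2$ and let $G'$ arise from $G$ by moving one vertex from $V_1$ to $V_2$. Setting $d = n_1 - n_2 - 1 \ge 1$ and $S = \sum_{k \ge 3} n_k$, I substitute $n_1 \mapsto n_1 - 1$ and $n_2 \mapsto n_2 + 1$ into the formula above and collect the change pair-by-pair. Contributions from pairs $\{k, \ell\}$ with $k, \ell \ge 3$ cancel exactly; for each $k \ge 3$, the combined change from pairs $\{1, k\}$ and $\{2, k\}$ telescopes to $2d \cdot n_k (n - n_k - 2)$; and careful expansion of the $\{1, 2\}$ pair change yields the factored form $d[S(n-3) + 2 n_2 (n_1 - 1)]$. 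Summing,
\[
\nu(P_3, G') - \nu(P_3, G) = d \left[ S(n - 3) + 2 n_2 (n_1 - 1) + 2 \sum_{k = 3}^{r} n_k (n - n_k - 2) \right].
\]
Every summand inside the bracket is nonnegative when $n \ge 4$ (using $n_1 \ge 2$, which implies $n - n_k \ge n_1 \ge 2$ for each $k \ge 3$), so $\nu(P_3, G') \ge \nu(P_3, G)$.

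The main obstacle is the algebraic bookkeeping in the smoothing calculation: the factorization of the $\{1, 2\}$ pair change as $d[S(n-3) + 2 n_2(n_1 - 1)]$ requires careful expansion, cancellation, and regrouping of terms across the $\binom{r}{2}$-fold sum, since the naive expressions involve the moderately large constants $n^2 - 3n + 1$ and $(n-2)(n_i + n_j)$. Once that identity is verified, nonnegativity of each summand is immediate from the hypotheses, and the iteration argument sketched in the first paragraph finishes the proof.
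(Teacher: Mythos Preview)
Your proof is correct and follows essentially the same smoothing argument as the paper: derive a closed form for $\nu(P_3,G)$ as a sum over pairs of parts, compute the change under moving one vertex between two unbalanced parts, factor out $d=n_1-n_2-1$, and verify each remaining term is nonnegative. As a side note, your expression for the change carries a coefficient $2$ on the sum $\sum_{k\ge 3}n_k(n-n_k-2)$ that the paper's stated $\Delta_{P_3}$ omits; a quick check (e.g.\ parts $3,1,1\to 2,2,1$, where the actual change is $28-18=10$) confirms your version is the arithmetically correct one, though this is harmless for either argument since only nonnegativity is used.
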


\begin{proof}
We count the number of $P_3$ in a complete multipartite graph using a similar
approach to that in the proof of Theorem~\ref{lem:opt_lower_bound}. We sum over
each edge and count the number of $P_3$ with that edge as the center. If $e =
xy$ is an edge in the center of $P_3$ with $x$ in vertex class $V_x$ and $y$ in
vertex class $V_y$, let the other edges of the $P_3$ be $wx$ and $yz$. We
classify the $P_3$ into one of four types depending on the location of $w$ and
$z$.
\begin{itemize}
\item There are $(|V_x|-1)(|V_y|-1)$ such $P_3$ with $w \in V_y$ and $z \in
V_x$ as we may not reselect $x$ or $y$.
\item When $w \in V_y$ but $z \notin V_x$, there are $(|V_y|-1)(n-|V_x|-|V_y|)$
choices for the $P_3$ as $z$ falls in some vertex class other than $V_x$ or
$V_y$.
\item Similarly, when $w \notin V_y$ and $z \in V_x$, there are
$(n-|V_x|-|V_y|)(|V_x|-1)$ many such $P_3$.
\item Finally, if $w \notin V_y$ and $z \notin V_x$, then we must take care to
select them uniquely.  Choosing $w$ first and then $z$ gives
$(n-|V_x|-|V_y|)(n-|V_x|-|V_y|-1)$ many such $P_3$.
\end{itemize}
Thus in total, for complete multipartite graphs $G$,
\begin{align*}
\nu(P_3,G) &= \sum_{e=xy} (|V_x|-1)(|V_y|-1) + (|V_y|-1)(n-|V_x|-|V_y|) \quad +\\
	&\qquad\qquad+\quad(n-|V_x|-|V_y|)(|V_x|-1) + (n-|V_x|-|V_y|)(n-|V_x|-|V_y|-1)\\
	&= \sum_{e=xy} (|V_x|-1)(|V_y|-1) + (n-|V_x|-|V_y|)(n-3)
\end{align*}
Now suppose that $G$ has $r$ parts $V_1, \ldots, V_r$. There are $|V_i||V_j|$
edges between parts $V_i$ and $V_j$, each of which contributes the same term in
the sum above. Thus we may also write
\begin{equation}\label{eq:multipartite_p3_count}
\nu(P_3,G) = \sum_{1 \le i < j \le r}|V_i||V_j|\bigl((|V_i|-1)(|V_j|-1) + (n-|V_i|-|V_j|)(n-3)\bigr).
\end{equation}

Let $G$ be a complete $r$-partite graph on $n$ vertices with parts $V_1,
\ldots, V_r$ such that $|V_1| \ge |V_2|+2$. If $G$ has no edges but at least
four vertices, it cannot be extremal, so assume $G$ contains at least one edge.
Define $G'$ to be the complete multipartite graph on $n$ vertices with parts
$V_1', V_2', \ldots, V_r'$ where $|V_1'| = |V_1|-1$, $|V_2'| = |V_2|+1$, and
$|V_i'| = |V_i|$ for $i \ge 3$.

After straightforward, if tedious, calculation, we use
(\ref{eq:multipartite_p3_count}) to see $\nu(P_3,G')- \nu(P_3,G) = \Delta_{P_3}$ where
\[ \Delta_{P_3} = (|V_1|-|V_2|-1)\bigl((n-|V_1|-|V_2|)(n-3) + 2(|V_1|-1)|V_2|+
\sum_{j=3}^r |V_j|(n-2-|V_j|)\bigr) \]

Note that by assumption $|V_1| \ge |V_2|+2$ and $n \ge 4$, that $n \ge |V_1|+|V_2|$
as $V_1, V_2 \subseteq V(G)$ and that $n - 2 \ge |V_j|$ for $j \ge 3$ because $|V_j|
\le n - |V_1|$ and $V_1$ must have at least two vertices to satisfy $|V_1| \ge
|V_2|+2$. Thus $|V_1|-|V_2|-1$ is strictly positive and $(n-|V_1|-|V_2|)(n-3)$,
$2(|V_1|-1)|V_2|$, and $\sum_{j=3}^r |V_j|(n-2-|V_j|)$ are each nonnegative. If
$\Delta_{P_3} = 0$, then each term must be exactly zero. This means $n = |V_1| +
|V_2|$ and $|V_2| = 0$. But then $n = |V_1|$, so all of the vertices of $G$ are
in one part which contradicts that $G$ has at least one edge. We conclude
$\Delta_{P_3} > 0$ and thus $G'$ contains more $P_3$ than $G$.

Thus we see $G$ was not extremal and therefore the Tur\'an graph, the unique
complete $r$-partite graph in which no pair of vertex classes differs in size by
more than one, is the complete $r$-partite graph with the greatest number of $P_3$.
\end{proof}

In the next lemma, we prove that if $G$ has large $P_3$-density, it is close in
edit distance to a nearly balanced complete $r$-partite graph.

\begin{lemma}[Lemma C]\label{lem:partite_sizes_mostly_balanced}
For any two independent parameters $\varepsilon_C > 0$ and $\gamma_C
> 0$ there are $\eta_C$ and $\delta_C > 0$ such that if $G$ is a $K_{r+1}$-free graph with
order $n \ge \eta_C$ satisfying $d(P_3,G) > \OPT_r(P_3)-\delta_C$, then there is
a complete $r$-partite graph $G'$ with parts $X_1, \ldots, X_r$ satisfying
$\Dist(G,G') \le \gamma_C n^2$ and, for each $1 \le i \le r$,
\[ \frac{1-\varepsilon_C}{r}n \le |X_i| \le \frac{1+\varepsilon_C}{r}n. \]
\end{lemma}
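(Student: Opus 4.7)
The plan is to combine the Induced Removal Lemma with the structural characterization of complete multipartite graphs from Lemma~\ref{lem:multipartite_characterization}, and then close the argument with a compactness estimate on the simplex of part proportions. Given $\varepsilon_C, \gamma_C > 0$, I would first invoke Lemma~\ref{lem:removal_lemma} applied to the family $\mathcal{F} = \{\overline{P_2}, K_{r+1}\}$ with parameter $\varepsilon_A := \gamma_C$, obtaining thresholds $\eta_A$ and $\delta_A$. I would then apply Lemma~\ref{lem:tight_graphs} with $\varepsilon_B := \delta_A$ to obtain $\eta_B$ and $\delta_B$. If $G$ is $K_{r+1}$-free of order $n \geq \max(\eta_A, \eta_B)$ with $d(P_3, G) \geq \OPT_r(P_3) - \delta_B$, then $G$ has at most $\delta_A n^3$ induced co-cherries and zero (hence at most $\delta_A n^{r+1}$) copies of $K_{r+1}$, so Lemma~\ref{lem:removal_lemma} yields a graph $G'$ with $\Dist(G, G') \leq \gamma_C n^2$ that has no induced $\overline{P_2}$ and no $K_{r+1}$. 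Lemma~\ref{lem:multipartite_characterization} then forces $G'$ to be complete multipartite, and the $K_{r+1}$-freeness restricts $G'$ to have $s \leq r$ parts.

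Next I would transfer the density hypothesis from $G$ to $G'$: because each altered edge lies in $O(n^2)$ copies of $P_3$, the $\gamma_C n^2$ edits change $d(P_3)$ by at most $O(\gamma_C)$, so $d(P_3, G') \geq \OPT_r(P_3) - \delta_B - O(\gamma_C)$. Writing $G' = K_{n_1, \dots, n_s}$ with $\lambda_i = n_i/n$ and substituting into equation \eqref{eq:multipartite_p3_count} yields
\[ d(P_3, G') = 24 \sum_{1 \leq i < j \leq s} \lambda_i \lambda_j (1-\lambda_i)(1-\lambda_j) + o(1). \]
Denote the right-hand sum by $f(\lambda_1, \dots, \lambda_s)$. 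Lemma~\ref{lem:rpartite_turan_best} (interpreted in the continuous limit) shows that, over the probability simplex in $s$ coordinates, $f$ is uniquely maximized at the balanced point, where it equals $\tfrac{(s-1)^3}{2 s^3}$ and yields density $12((s-1)/s)^3$.

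The argument finishes with two compactness observations. First, since $12((s-1)/s)^3$ is strictly increasing in $s$, any $s \leq r-1$ would give $d(P_3, G') \leq 12((r-2)/(r-1))^3 + o(1)$, strictly below $\OPT_r(P_3)$ by a gap that depends only on $r$; taking $\delta_B, \gamma_C$ small enough and $n$ large enough contradicts the lower bound on $d(P_3, G')$, forcing $s = r$. Second, with $s = r$, the function $f$ is continuous on the compact simplex, so the closed set $S_{\varepsilon_C} = \{\lambda : |\lambda_i - 1/r| \geq \varepsilon_C/r \text{ for some } i\}$ admits a maximum $f^* < f(1/r, \dots, 1/r)$, with gap $c(\varepsilon_C) := f(1/r, \dots, 1/r) - f^* > 0$. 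Choosing $\gamma_C$ and $\delta_B$ so that $\delta_B + O(\gamma_C) < 24\,c(\varepsilon_C)$ and $n$ large enough to dominate the $o(1)$ term guarantees that $(\lambda_1, \dots, \lambda_r) \notin S_{\varepsilon_C}$, which is exactly the required balance condition on the $|X_i|$.

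The main obstacle is a bookkeeping one: all of the parameters $\varepsilon_A, \delta_A, \varepsilon_B, \delta_B, \gamma_C$ must be chosen in the right order so that the edit-distance error $O(\gamma_C)$ introduced in the transfer step is strictly smaller than the compactness gap $c(\varepsilon_C)$, and so that both are dwarfed by the spectral gap $12((r-1)/r)^3 - 12((r-2)/(r-1))^3$ used to pin down $s$. Once this hierarchy of constants is set up correctly, every structural step reduces to a direct application of a lemma already in the excerpt together with the continuity and uniqueness of the balanced maximizer of $f$.
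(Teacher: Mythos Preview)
Your overall strategy is sound and your compactness argument on the simplex of part proportions is a genuinely different (and cleaner) route than the paper's. The paper arrives at the same multipartite $G'$ via Lemmas~A and~B, but then establishes the balance condition by an explicit, case-by-case analysis: it uses the formula for $\Delta_{P_3}$ from Lemma~\ref{lem:rpartite_turan_best} to show that if some part is too large or too small, one can repeatedly shift vertices between parts and drive $d(P_3,G')$ strictly above $\OPT_r(P_3)$. Your argument replaces all of that case analysis with a single continuity/compactness step, which is more elegant and generalizes more readily; the paper's approach, in return, yields explicit quantitative constants. Your idea of applying the Induced Removal Lemma to the pair $\{\overline{P_2},K_{r+1}\}$ rather than to $\overline{P_2}$ alone is also a nice touch, since it guarantees directly that $G'$ has at most $r$ parts.

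There is one genuine bookkeeping error, however. You write ``Choosing $\gamma_C$ and $\delta_B$ so that $\delta_B + O(\gamma_C) < 24\,c(\varepsilon_C)$,'' but $\gamma_C$ is \emph{given} in the hypothesis of the lemma and cannot be chosen; in particular nothing prevents the user from handing you $\gamma_C = 1$ and $\varepsilon_C = 10^{-6}$, in which case the transfer error $O(\gamma_C)$ swamps the compactness gap $c(\varepsilon_C)$ and the argument collapses. The fix is the one the paper uses: introduce an auxiliary parameter $\gamma_C' := \min(\gamma_C,\, c'(\varepsilon_C))$ for a suitable $c'(\varepsilon_C)>0$, apply the removal lemma with $\varepsilon_A = \gamma_C'$ instead of $\gamma_C$, and note that $\Dist(G,G')\le \gamma_C' n^2 \le \gamma_C n^2$ still gives the required edit-distance bound while the transfer error is now $O(\gamma_C')$, which you \emph{can} make as small as you like relative to $c(\varepsilon_C)$. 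Once you make this substitution, the rest of your outline goes through.
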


\begin{proof}
Let $\varepsilon_C, \gamma_C > 0$ be given. We require a
$\gamma_C' > 0$ but defer its exact definition until later. Take
$\eta_A$ and $\delta_A$ to be as in Lemma~\ref{lem:removal_lemma} so that any
graph $G$ of order $n \ge \eta_A$ containing at most $\delta_A n^3$ copies of
$\overline{P_2}$ can be made $\overline{P_2}$-free by editing at most
$\gamma_C' n^2$ edges. Then take $\eta_B$ and $\delta_B$ to be as in
Lemma~\ref{lem:tight_graphs} so that for any graph $G$ of order $n \ge \eta_B$
which satisfies $d(P_3,G) \ge \OPT_r(P_3)-\delta_B$ contains at most $\delta_A
n^3$ copies of $\overline{P_2}$ (that is, apply Lemma~\ref{lem:tight_graphs}
with $\varepsilon_B = \delta_A$).

Though we are not ready to define them yet, we will ensure $\eta_C \ge
\max(\eta_A, \eta_B)$ and $\delta_C \le \min(\delta_A,\delta_B)$. Let $G$ be a
graph of order $n \ge \eta_C$ satisfying $d(P_3,G) \ge \OPT_r(n)-\delta_C$. By
Lemma~\ref{lem:tight_graphs}, $G$ has at most $\delta_An^3$ copies of
$\overline{P_2}$ and thus by Lemma~\ref{lem:removal_lemma} we may edit at most
$\gamma_C' n^2$ edges of $G$ to get a $\overline{P_2}$-free graph, $G'$. It
follows from Lemma~\ref{lem:multipartite_characterization} that $G'$ is a
complete $r$-partite graph as it is both $K_{r+1}$-free and
$\overline{P_2}$-free. Let $X_1,\dots,X_r$ denote the partite sets of $G'$.
We complete the proof by demonstrating these partite sets all have size nearly
$\frac{n}{r}$.

There is a constant $c > 0$ such that each edge removed from $G$ is contained in at most
$cn^2$ copies of $P_3$. (The constant $c$ counts the number of ways to extend an
edge and two other vertices into a copy of $P_3$.) Thus
\[ d(P_3,G') \geq \OPT_r(P_3) - \delta_C - c\gamma_C' \]
as the $P_3$-density of the removed edges is at most
\[ \frac{\gamma_C' n^2\cdot c n^2}{n^{|V(P_3)|}} =
c\gamma_C'. \]

To prove that the partite sets have bounded size, we will show that if they do
not, we may alter $G'$ to increase its $P_3$ density beyond $\OPT_r(P_3)$.
As $\OPT_r(P_3)$ is, by definition, a limit, we can, for large enough $\eta_C$,
get upper bounds on the $P_3$-density of such graphs that are as close to
$\OPT_r(P_3)$ as necessary to arrive at a contradiction.

We require a partial result from the proof of
Lemma~\ref{lem:rpartite_turan_best}. Recall that when moving one vertex from
vertex class $V_1$ to vertex class $V_2$ the change in the number of $P_3$
subgraphs was

\[ \Delta_{P_3} = (|V_1|-|V_2|-1)\bigl((n-|V_1|-|V_2|)(n-3) + 2(|V_1|-1)|V_2|+
\sum_{j=3}^r |V_j|(n-2-|V_j|)\bigr) \]

Assume first that there is a partite set that is too large. Specifically,
assume, without loss of generality, that $|X_1| > \frac{1+\varepsilon_C}{r}n$. We
consider two cases.

First, assume
\[ \frac{1+\varepsilon_C}{r}n < |X_1| \le \frac{n}{2}. \]
There must be a partite set of $G'$, say $X_2$, that satisfies $|X_2| <
\frac{n}{r}$; if not,
\[ n = \sum_{i=1}^r |X_i| \ge \frac{1+\varepsilon_C}{r}n + (r-1)\frac{n}{r} = n +
\frac{\varepsilon_C}{r}n \]
is a contradiction. Consider the process of moving one vertex from
$X_1$ to $X_2$ repeated $\frac{\varepsilon_C}{3r}n$ times. At each step of this
process,
\[ |X_1| - |X_2| > \left(\frac{1+\varepsilon_C}{r}n -
\frac{\varepsilon_C}{3r}n\right) - \left(\frac{1}{r}n +
\frac{\varepsilon_C}{3r}n\right) = \frac{\varepsilon_C}{3r} n. \]
We take $\eta_C$ large enough that this value is always at least $2$ so that
number of $P_3$ subgraphs increases at every step. In
particular, as $|X_1|+|X_2|$ stays constant and
\[ |X_1|+|X_2| < \frac{n}{2} + \frac{n}{r} \le \frac{3}{4}n \]
we have
\[ \Delta_{P_3} \ge (|X_1|-|X_2|-1)((n-|X_1|-|X_2|)(n-3)) \ge
\left(\frac{\varepsilon}{3r}n-1\right)\left((n-\tfrac{3}{4}n)(n-3)\right). \]
Take $\eta_C$ large enough so that $n \ge \eta_C$ implies $n-3 \ge \frac{n}{2}$
and $\frac{\varepsilon_C}{3r}n-1 \ge \frac{\varepsilon_C}{4r}n$, giving
\[ \Delta_{P_3} \ge \frac{\varepsilon_C}{4r}n \cdot \frac{n}{4} \cdot \frac{n}{2}
= \frac{\varepsilon_C}{32r}n^3. \]
Now, as we repeat this process $\frac{\varepsilon_C}{3r} n$ times, the total
increase in the number of copies of $P_3$ is at least
\[ \frac{\varepsilon_C}{3r}n \cdot \frac{\varepsilon_C}{32r}n^3 =
\frac{\varepsilon_C^2}{96r^2}n^4. \]
As $|V(P_3)| = 4$, this increases the $P_3$ density of $G'$ by at least
$\frac{\varepsilon_C^2}{96r^2}$. By choosing $\delta_C$ and $\gamma_C'$ such
that
\[ \delta_C + c\gamma_C' < \frac{\varepsilon_C^2}{96r^2} \]
we arrive at a graph $G''$ with
\[ d(P_3,G'') \ge d(P_3,G') + \frac{\varepsilon_C^2}{96r^2} \ge \OPT_r(P_3)
-\delta_C -c\gamma_C' +\frac{\varepsilon_C^2}{96r^2} > \OPT_r(P_3),\]
a contradiction for large enough $\eta_C$.

Otherwise we have $|X_1| \ge \frac{n}{2}$. We wish to use a similar approach to
the first case, but we must assure that the lower bound on $\Delta_{P_3}$ is
cubic in $n$ at each step of the process. Note that for $2 \le i \le r$, we must have
$|X_i| \le \frac{1}{2(r-1)}n \le \frac{1}{6}n$ (recall $r \ge 4$). We start by
moving $\frac{n}{12}$ vertices from $X_1$ to $X_2$. These moves increase the
number of copies of $P_3$, but we disregard those increases. Then we have $|X_1|
> \frac{n}{2}-\frac{n}{12} = \frac{5}{12}n$ and
\[ \frac{n}{12} \le |X_2| \le \frac{n}{6} + \frac{n}{12} = \frac{3}{12}n. \]
Starting from this modified graph we can move $\frac{n}{24}$ additional
vertices from $X_1$ to $X_2$. For each such move, we have
\[ (|X_1|-|X_2|-1) \ge \left(\frac{5}{12}n-\frac{1}{24}n\right) -
\left(\frac{3}{12}n+\frac{1}{24}n\right) - 1 = \frac{n}{12} - 1 \ge \frac{n}{13} \]
by choosing $\eta_C$ large enough, and
\[ 2(|X_1|-1)|X_2| \ge 2\left(\frac{5}{12}n-1\right)\left(\frac{n}{12}\right) >
\frac{n^2}{18}, \]
again with $\eta_C$ large enough. Thus
\[ \Delta_{P_3} \ge \frac{n}{13}\cdot \frac{n^2}{18} = \frac{n^3}{234} \]
and repeating this process $\frac{n}{24}$ times increases the total number of
$P_3$ subgraphs by at least $\frac{n^4}{5616}$, increasing the $P_3$ density of $G'$ by
$\frac{1}{5616}$. By taking $\delta_C + c\gamma_C' < \frac{1}{5616}$ we again
get a graph with $P_3$ density larger than the optimal density, a contradiction
when $\eta_C$ is sufficiently large.

Finally, we now assume for contradiction that $|X_1| <
\frac{1-\varepsilon_C}{r}n$. If $|X_1| < \frac{1-(r-1)\varepsilon_C}{r}n$, then
there must be another partite set $X_i$ with $|X_i| > \frac{1+\varepsilon_C}{r}n$
as otherwise
\[ n = \sum_{i=1}^r |X_i| < \frac{1-(r-1)\varepsilon_C}{r}n +
(r-1)\frac{1+\varepsilon_C}{r}n = n\]
is a contradiction. As we have already handled cases with a too large part, we
may assume
\[ \frac{1-(r-1)\varepsilon_C}{r}n \le |X_1| < \frac{1-\varepsilon_C}{r}n. \]
There must be a partite set $X_i$ with $|X_i| > \frac{n}{r}$, again because
otherwise the parts combined cannot contain $n$ vertices. Then we move a vertex
from $X_i$ to $X_1$ and repeat the move $\frac{\varepsilon}{3r}n$ times. Then as
before at every step of the process
\[ |X_i|-|X_1| \ge \frac{1-\varepsilon}{3r}n > 0 \]
and, using very rough bounds,
\[ |X_1|+|X_i| \le \frac{1-\varepsilon_C}{r}n + \frac{1+\varepsilon_C}{r}n <
\frac{n}{r}+\frac{n}{2} \le \frac{3}{4}n. \]
Therefore this process also increases the $P_3$ density of $G'$ by at least
$\frac{\varepsilon_C^2}{96r^2}$, a contradiction for $\delta_C$ small enough. We
conclude each partite set $X_1, \ldots, X_r$ must be within the specified
bounds.

For completeness, we explicitly specify our choices of $\eta_C, \delta_C$, and
$\gamma_C'$. We set
\begin{align*}
\delta_C &= \min\left(\delta_A, \delta_B, \frac{1}{20000},
\frac{\varepsilon_C^2}{200r^2}\right)\\
\gamma_C' &= \min\left(\gamma_C, \frac{1}{20000c},
\frac{\varepsilon_C^2}{200cr^2}\right)\\
\eta_C &\ge \max\left(\eta_A, \eta_B, \frac{12r}{\varepsilon_C}, 144\right)
\end{align*}
where $\eta_C$ is also large enough to guarantee all graphs of this form are
sufficiently close to $\OPT_r(P_3)$.

These choices assure that we can combine Lemmas~\ref{lem:removal_lemma} and \ref{lem:tight_graphs}
to produce a $G'$ with $\Dist(G,G') \le \gamma_C' n^2 \le \gamma_C n^2$ also
that
\[ \delta_C + c\gamma_C' \le \frac{1}{20000} + \frac{c}{20000c} =
\frac{1}{10000} < \frac{1}{5616} \]
and
\[ \delta_C + c\gamma_C' \le \frac{\varepsilon_C^2}{200r^2} +
\frac{c\varepsilon_C^2}{200cr^2} = \frac{\varepsilon_C^2}{100r^2} <
\frac{\varepsilon_C^2}{96r^2}, \]
as well as the bounds we use on $n$, all hold.
\end{proof}

We are now ready to prove Lemma~\ref{lem:actual_stability}.

\begin{proof}[Proof of Lemma~\ref{lem:actual_stability}]
Let $\varepsilon > 0$ be given. Set $n_0 = \eta_C$ and $\delta = \delta_C$ from
Lemma~\ref{lem:partite_sizes_mostly_balanced} with $\gamma_C = \varepsilon/2$
and $\varepsilon_C = \varepsilon/2r$. Then given a graph $G$ of order $n \ge
n_0$ that satisfies $d(P_3, G) \ge \OPT_r(P_3)-\delta$, we get a complete
$r$-partite graph $G'$ satisfying $\Dist(G,G') \le \frac{\varepsilon}{2}n^2$ and
with parts $X_1, \ldots, X_r$ satisfying
\[ \frac{1-\frac{\varepsilon}{2r}}{r}n \le |X_i| \le
\frac{1+\frac{\varepsilon}{2r}}{r}n. \]
We claim $\Dist(G',T_r(n)) \le \frac{\varepsilon}{2}n^2$. From each of the $r$
parts, at most $\frac{\varepsilon}{2r}n$ vertices must be added to or removed
from that part. Thus in total, $\frac{\varepsilon}{2}n$ vertices are altered.
Each vertex requires changing at most $n$ adjacencies, so the total edit
distance is bounded above by $\frac{\varepsilon}{2}n^2$.

Finally, by first making the at most $\frac{\varepsilon}{2}n^2$ edits to change
$G$ into $G'$ and then making the at most $\frac{\varepsilon}{2}n^2$ edits to
change $G'$ into $T_r(n)$, we have demonstrated $\Dist(G,T_r(n)) \le \varepsilon
n^2$, completing the proof.
\end{proof}

\section{Exact Result}\label{sec:exact_result}

In this section we will prove Theorem~\ref{main_p4_thm}(ii). We now know that
for large enough $n$, if $G$ is an $n$-vertex $K_{r+1}$-free graph that is close
to being extremal, then $G$ is close in edit-distance to $T_r(n)$. As we will
show in this section, the process of adding or removing the necessary edges in
order to transform $G$ into $T_r(n)$ must increase the number of
$P_3$-subgraphs in $G$. First we need the following proposition, which
shows that in any extremal graph each pair of vertices must be contained in
approximately the same number of $P_3$-subgraphs. We define $\nu_G(v,T)$ as the number of (not necessarily induced) subgraphs of a graph $G$
isomorphic to $T$ containing $v$.

\begin{prop}\label{prop_lower_bound_path_vertex}
Fix $r \geq 4$. Then there exists an $n_0 = n_0(r)$ such that if $G$ a $K_{r+1}$-free graph on $n \geq n_0$ vertices for which $\nu(P_3,G) = \ext(n,P_3,K_{r+1})$, then for every vertex $v \in V(G)$
\[ 
\nu_G(v,P_3) \geq \left( \OPT_r(P_3) - \frac{1}{r^{10}} \right)\binom{n-1}{3} - \frac{1}{r^4}n^3 . 
\]
\end{prop}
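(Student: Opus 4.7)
The plan is a delete-and-clone symmetrization argument. Assuming for contradiction that some vertex $v \in V(G)$ violates the claimed lower bound, I will construct a $K_{r+1}$-free graph $G'$ on $n$ vertices with strictly more copies of $P_3$ than $G$, contradicting the assumption that $\nu(P_3,G) = \ext(n,P_3,K_{r+1})$.

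First I would locate a high-count vertex $u$. Since $G$ is extremal and $T_r(n)$ is $K_{r+1}$-free, $\nu(P_3,G) \geq \nu(P_3,T_r(n))$, and the explicit computation in the proof of Lemma~\ref{lem:opt_lower_bound} gives $\nu(P_3,T_r(n)) \geq (\OPT_r(P_3) - \tfrac{1}{2r^{10}})\binom{n}{4}$ once $n$ is sufficiently large in terms of $r$. Using $\sum_{w\in V(G)} \nu_G(w,P_3) = 4\nu(P_3,G)$ together with $4\binom{n}{4}/n = \binom{n-1}{3}$, averaging produces a vertex $u \in V(G)$ with
\[ \nu_G(u,P_3) \ \geq\ \left(\OPT_r(P_3) - \tfrac{1}{2r^{10}}\right)\binom{n-1}{3}. \]
Note $u \neq v$ by the contradiction assumption.

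Next I would form $G'$ by deleting $v$ from $G$ and inserting a new vertex $u'$ whose neighborhood is $N_G(u) \setminus \{v\}$; that is, $u'$ is a non-adjacent twin of $u$ in $G-v$. Then $|V(G')| = n$ and $G'$ is $K_{r+1}$-free, because any $(r+1)$-clique in $G'$ contains at most one of the non-adjacent pair $\{u,u'\}$, and substituting $u$ for $u'$ if necessary yields a $K_{r+1} \subseteq G$, which is impossible. Copies of $P_3$ in $G'$ containing $u'$ but not $u$ biject with copies of $P_3$ in $G-v$ containing $u$ via the swap $u' \leftrightarrow u$, so
\[ \nu_{G'}(u',P_3) \ \geq\ \nu_{G-v}(u,P_3) \ \geq\ \nu_G(u,P_3) - 12\binom{n-2}{2}, \]
using that each pair $\{u,v\}$ lies in at most $12\binom{n-2}{2}$ copies of $P_3$ (at most $12$ per $4$-set, as $\nu(P_3,K_4) = 12$). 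Hence
\[ \nu(P_3,G') \ \geq\ \nu(P_3,G) - \nu_G(v,P_3) + \nu_G(u,P_3) - 12\binom{n-2}{2}. \]

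Finally, combining the lower bound on $\nu_G(u,P_3)$ with the assumed upper bound $\nu_G(v,P_3) < (\OPT_r(P_3) - \tfrac{1}{r^{10}})\binom{n-1}{3} - \tfrac{n^3}{r^4}$ reduces the desired inequality $\nu(P_3,G') > \nu(P_3,G)$ to
\[ \tfrac{1}{2r^{10}}\binom{n-1}{3} + \tfrac{n^3}{r^4} \ >\ 12\binom{n-2}{2}, \]
which holds for $n$ large relative to $r$ since the cubic term $n^3/r^4$ alone dominates the quadratic right-hand side. This contradicts the extremality of $G$, completing the proof. The main point of care is the bookkeeping for $P_3$'s using both $u$ and $v$ together with the $K_{r+1}$-freeness of the cloned graph; both are routine because the twin is taken non-adjacent to $u$ and the overcount is only $O(n^2)$, comfortably absorbed by the $n^3/r^4$ slack in the statement.
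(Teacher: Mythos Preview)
Your proof is correct and follows essentially the same delete-and-clone symmetrization argument as the paper: average to find a high-count vertex $u$, delete the deficient vertex $v$, add a non-adjacent twin $u'$ of $u$, and verify the gain $\nu_G(u,P_3)-\nu_G(v,P_3)$ exceeds the $O(n^2)$ loss from $P_3$'s through both $u$ and $v$. The only cosmetic differences are your bound $12\binom{n-2}{2}$ in place of the paper's $2n^2$ for that loss term, and your use of $\tfrac{1}{2r^{10}}$ rather than $\tfrac{1}{r^{10}}$ in the averaging step.
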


\begin{proof}
From the proof of Theorem~\ref{main_p4_thm}(i), there must exist some $n_0$ such that 
\[ 
\nu(P_3,G) \geq \left( \OPT_r(P_3) - \frac{1}{r^{10}} \right)\binom{n}{4} \]
for every extremal graph $G$ on $n \geq n_0$ vertices. Suppose that $G$ is such
a graph on $n \geq \max \{n_0,2r^4\}$ vertices. We count the copies of $P_3$ in $G$
in two ways to see
\[ 
\sum\limits_{v \in V(G)} \nu_G(v,P_3) = 4 \nu(P_3,G) \geq 4 \left( \OPT_r(P_3) - \frac{1}{r^{10}} \right)\binom{n}{4}.
\]
Thus, by averaging there must exist some vertex $u \in V(G)$ for which
\[ 
\nu_G(u,P_3) \geq \left( \OPT_r(P_3) - \frac{1}{r^{10}} \right)\binom{n-1}{3}. 
\]
Suppose for contradiction that for some vertex $v \in V(G)$,
\[ \nu_G(v,P_3) < \left( \OPT_r(P_3) - \frac{1}{r^{10}} \right)\binom{n-1}{3} - \frac{1}{r^4}n^3.\]

Let $G'$ be the graph obtained from $G$ by deleting $v$ and replacing it with a
vertex $u'$ so that $N(u') = N(u)$. We claim that $G'$ is $K_{r+1}$-free.
Suppose for contradiction that it is not. Then $u'$ must be contained in every
copy of $K_{r+1}$ in $G'$. As $u$ is not adjacent to $u'$, none of these
$K_{r+1}$ contain $u$. However, since $N(u) = N(u')$, this implies that we
can replace $u'$ with $u$ in each $(r+1)$-clique. Since $V(G') - \{u'\} = V(G) -
\{v\}$, this implies the existence of an $(r+1)$-clique in $G$, which is a
contradiciton.

Let $\nu_G(u,v,P_3)$ denote the number of $P_3$ subgraphs containing both $u$ and $v$ in $G$. Then since $\nu_G(u',P_3) = \nu_G(u,P_3)$, we have added at least $\nu_G(u,P_3) -  \nu_G(u,v,P_3)$ subgraphs and removed at most $\nu_G(v,P_3)$ subgraphs. Hence,
\[
\nu(P_3,G') = \nu(P_3,G) + \nu_G(u,P_3) - \nu_G(u,v,P_3) - \nu_G(v,P_3)
\]
Since $\nu_G(u,v,P_3) \leq 2n^2$,
\[ 
\nu(P_3,G') > \nu(P_3,G) + \frac{1}{r^4}n^3 - 2n^2.
\]
By assumption, $\frac{1}{r^4}n^3 - 2n^2 > 0$. This would imply that $\nu(P_3,G') > \nu(P_3,G)$ which contradicts the assumption that $G$ was extremal, completing the proof.
\end{proof}

We will also require the following proposition much later in the proof of Theorem~\ref{main_p4_thm}(ii), where we will provide more explanation of why it is required. For completeness, we will state it here.  

\begin{prop}\label{prop:n_0 bounds for the type 2 and 1 claims}
For all integers $r \geq 4$, there exists an $n_0 = n_0(r)$ such that for all $n \geq n_0$, 
\begin{itemize}
\item[(i)] $\OPT_r(P_3)\binom{n-1}{3} - \delta_1(r)\frac{n^3}{6} \geq \left(\frac{9}{r} - \frac{39}{2r^2}\right)n^3$,
where 
\[ \delta_1(r) = 12 - \frac{45}{r} + \frac{111}{2r^2} - \frac{27}{2r^3} - \frac{21}{r^4} + \frac{24}{r^5} + \frac{3}{2r^6} - \frac{3}{2r^7}.\]
\item[(ii)] $\OPT_r(P_3)\binom{n-1}{3} - \delta_2(r)\frac{n^3}{6} \geq \left(\frac{18}{r} - \frac{42}{r^2} - \frac{12}{r^3} \right)n^3,$
where 
\[ \delta_2(r) = 12 - \frac{54}{r} + \frac{78}{r^2} - \frac{96}{r^4}  + \frac{72}{r^5} + \frac{24}{r^6} - \frac{24}{r^7}\]
\end{itemize}
\end{prop}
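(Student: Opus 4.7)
Both inequalities are polynomial-in-$n$ statements whose coefficients are Laurent polynomials in $r$; the proof is a routine algebraic check. I describe the approach for part (i); part (ii) follows the identical template with $\delta_2$ in place of $\delta_1$.

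First I would substitute $\OPT_r(P_3) = \frac{12(r-1)^3}{r^3} = 12 - \frac{36}{r} + \frac{36}{r^2} - \frac{12}{r^3}$ from Theorem~\ref{main_p4_thm}(i) and expand $\binom{n-1}{3} = \frac{n^3 - 6n^2 + 11n - 6}{6}$. Collecting powers of $n$ gives
\begin{equation*}
\OPT_r(P_3)\binom{n-1}{3} - \delta_1(r)\frac{n^3}{6} \;=\; \frac{\OPT_r(P_3) - \delta_1(r)}{6}\,n^3 \,-\, \OPT_r(P_3)\,n^2 \,+\, \frac{11\OPT_r(P_3)}{6}\,n \,-\, \OPT_r(P_3),
\end{equation*}
so the inequality reduces to a comparison of the $n^3$-coefficient with that of the right-hand side, together with an $O(n^2)$ remainder to be absorbed into the choice of $n_0(r)$.

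Next I would subtract $\delta_1(r)$ from $\OPT_r(P_3)$ term by term. The Laurent coefficients of $\delta_1$ are chosen precisely so that the leading $1/r$ and $1/r^2$ terms agree with what is needed on the right; the computation yields
\begin{equation*}
\OPT_r(P_3) - \delta_1(r) \;=\; \frac{9}{r} - \frac{39}{2r^2} + \frac{3}{2r^3} + \frac{21}{r^4} - \frac{24}{r^5} - \frac{3}{2r^6} + \frac{3}{2r^7}.
\end{equation*}
What remains is to check that the residue after subtracting the right-hand side's $n^3$-contribution is a strictly positive Laurent polynomial in $r$ for every integer $r \geq 4$. This is finite: the positive terms $\frac{3}{2r^3} + \frac{21}{r^4}$ already dominate $\frac{24}{r^5} + \frac{3}{2r^6}$ at $r = 4$ (clearing $r^5$, it suffices that $\frac{3}{2}r^2 + 21r \geq 24 + \frac{3}{2r}$, which holds at $r = 4$ and only improves as $r$ grows).

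Finally, since the $n^3$-gap between the two sides is bounded below by a strictly positive quantity depending only on $r$, choosing $n_0(r)$ large enough absorbs the $-\OPT_r(P_3)\,n^2 + O_r(n)$ error and completes the argument. Part (ii) is proved identically: substitute $\delta_2$, compute $\OPT_r(P_3) - \delta_2(r)$, and verify the resulting residue Laurent polynomial is positive for $r \geq 4$. The only ``obstacle'' here is bookkeeping of the various powers of $1/r$, exactly the kind of verification for which the SageMath code accompanying Section~\ref{sec:part_i_proof} is well suited.
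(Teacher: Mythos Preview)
Your proposal is correct and follows essentially the same approach as the paper: compute $\OPT_r(P_3)-\delta_i(r)$ explicitly, verify that the residual Laurent polynomial (after subtracting the target $n^3$-coefficient) is strictly positive for all $r\ge 4$, and then absorb the $O(n^2)$ terms by taking $n_0(r)$ large. The paper's proof is terser---it simply records the identity for $\OPT_r(P_3)-\delta_i(r)$ and asserts the inequality---while you spell out the expansion of $\binom{n-1}{3}$ and the domination argument for the tail, but the substance is identical.
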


\begin{proof}
Part (i) immediately follows from the inequality below, which is true for all $r \geq 4$.
\[ 
\OPT_r(P_3) - \delta_1(r) = \frac{9}{r} - \frac{39}{2r^2} + \frac{3}{2r^3} + \frac{21}{r^4} - \frac{24}{r^5}  - \frac{3}{2r^6} + \frac{3}{2r^7} > \frac{9}{r} - \frac{39}{2r^2},
\]

In an identical manner, part (ii) is implied from the following, which is true for all $r \geq 4$.
\[ 
\OPT_r(P_3) - \delta_2(r) = \frac{18}{r} - \frac{42}{r^2} - \frac{12}{r^3} + \frac{96}{r^4} - \frac{72}{r^5}  - \frac{24}{r^6} + \frac{24}{r^7} > \frac{18}{r} - \frac{42}{r^2} - \frac{12}{r^3},
\]
completing the proof.
\end{proof}

After assuming that $G$ is an extremal graph, and therefore close in edit-distance to $T_r(n)$, we will show that most vertices in $G$ must closely resemble a vertex appearing in the Tur\'{a}n graph. Given this fact, we will use Proposition~\ref{prop_lower_bound_path_vertex} to show that any vertex that does not look like this cannot be contained in enough copies of $P_3$ to justify $G$ being extremal. This will ultimately show that $G$ must be isomorphic to $T_r(n)$, since the removal/duplication process described in the proof of Proposition~\ref{prop_lower_bound_path_vertex} would otherwise increase the number of $P_3$ copies in $G$. 

\begin{proof}[Proof of Theorem~\ref{main_p4_thm}(ii)]
Let $r\geq 4$. Fix $\varepsilon > 0$ and assume that $n_0 = n_0(r)$ is large enough to satisfy the following conditions.
\begin{enumerate}
    \item[(i)] Any $K_{r+1}$-free graph $G$ on $n \geq n_0$ vertices with
\[ 
d(P_3,G) > OPT_r(P_3) - \varepsilon
\]
must satisfy $\text{Dist}(G,T_r(n)) \leq \frac{2}{r^{10}}n^2$.
\item[(ii)] $n_0 \geq 2r^4$ and is large enough to satisfy the conditions of Proposition~\ref{prop_lower_bound_path_vertex}.
\item[(iii)] $n_0$ is large enough to satisfy the conditions of Proposition~\ref{prop:n_0 bounds for the type 2 and 1 claims}.
\end{enumerate}

Let $G$ be an extremal graph on $n \geq n_0$ vertices. Recall that (i) means that we can transform $G$ into $T_r(n)$ by changing at most $\frac{2}{r^{10}}n^2$ adjacencies.
We will call each edge removed in the process of transforming $G$ into $T_r(n)$ a \emph{surplus edge}, and each added edge a \emph{missing edge}. Let $b(v)$ denote the total number of surplus edges and missing edges incident with a vertex $v$. If $v$ is a vertex for which $b(v) > \frac{1}{r^5} n$, then we say that $v$ is a \emph{bad vertex}. 

Partition the vertex set of $G$ into sets $X_1,X_2,\dots,X_r$ so that after changing all required adjancencies in $G$ the sets $X_1,X_2,\dots,X_r$ are the partite sets of $T_r(n)$. For the moment, move each bad vertex from its original set and place it into a new set $X_0$.
\begin{claim}\label{claim_ub_size_X0}
$|X_0| \leq \frac{1}{r^5} n.$
\end{claim}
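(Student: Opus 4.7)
The plan is a one-step double-counting argument on the collection of edit edges (surplus plus missing) between $G$ and $T_r(n)$.

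First I would invoke condition (i) of the setup, which guarantees $\mathrm{Dist}(G, T_r(n)) \le \tfrac{2}{r^{10}} n^2$ and hence bounds the total number of edit edges. Each such edge is incident to exactly two vertices of $G$, so summing $b(v)$ over all $v \in V(G)$ counts every edit edge exactly twice:
\[ \sum_{v \in V(G)} b(v) \;=\; 2\,\mathrm{Dist}(G, T_r(n)) \;\le\; \tfrac{4}{r^{10}}\, n^2. \]

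Next, by the definition of a bad vertex, every $v \in X_0$ contributes strictly more than $\tfrac{n}{r^5}$ to the left-hand side. Restricting the sum above to $X_0$ and rearranging yields $|X_0| < \tfrac{4n}{r^5}$. To obtain the sharper bound $|X_0| \le \tfrac{n}{r^5}$ stated in the claim, I would tighten the stability input: apply Lemma~\ref{lem:actual_stability} with a smaller $\varepsilon$ at the start of the proof of Theorem~\ref{main_p4_thm}(ii) so that condition (i) actually produces $\mathrm{Dist}(G, T_r(n)) \le \tfrac{n^2}{2 r^{10}}$. The same double count then closes with $|X_0| < \tfrac{n}{r^5}$, as desired.

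The argument is just bookkeeping, so there is no real mathematical obstacle; the only thing requiring care is calibration. The constant in condition (i), the threshold $\tfrac{n}{r^5}$ defining bad vertices, and the target bound $\tfrac{n}{r^5}$ on $|X_0|$ all involve the same power of $r$, so the stability constant must be chosen a fixed factor smaller than the naive value in order for the double count to close. This calibration is done once in the initial setup of the theorem and reused, since the bound on $|X_0|$ will later be combined with the per-vertex bound $b(v) \le \tfrac{n}{r^5}$ that holds by definition for every $v \notin X_0$ to control the number of copies of $P_3$ through each vertex.
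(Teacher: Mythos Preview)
Your approach is essentially identical to the paper's: both argue that the total edit budget $\mathrm{Dist}(G,T_r(n))$ is at most a constant times $n^2/r^{10}$, each bad vertex contributes more than $n/r^5$ to the sum $\sum_v b(v)$, and dividing gives the bound on $|X_0|$. The paper compresses this into the single line $|X_0|\cdot \tfrac{1}{r^5}n \le \tfrac{1}{r^{10}}n^2$; you are actually more careful than the paper in tracking the factor of two from double counting and in observing that the exact constant $\tfrac{1}{r^5}$ (rather than $\tfrac{4}{r^5}$) requires the stability input to be calibrated accordingly, which is harmless since $\varepsilon$ in Lemma~\ref{lem:actual_stability} is at our disposal.
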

\begin{proof}
Since $\text{Dist}(G,T_r(n)) \leq \frac{2}{r^{10}}n^2$ and each vertex $v \in X_0$ satisfies $b(v) > \frac{1}{r^5}n$, 
\[ |X_0|\cdot \frac{1}{r^5}n \leq \frac{1}{r^{10}} n^2.\]
Claim~\ref{claim_ub_size_X0} follows immediately.
\end{proof}

Now we will show that all surplus edges must be incident with at least one vertex in $X_0$. This will allow us to focus only on the bad vertices. For a finite collection of vertices $x_1,x_2,\dots,x_{\ell} \in V(G)$ let $N(x_1,x_2,\dots,x_{\ell})$ denote the \emph{common neighborhood} of $x_1,x_2,\dots,x_{\ell}$, which is the set of vertices in $V(G)$ adjacent to each of $x_1,x_2,\dots,x_{\ell}.$
\begin{claim}\label{claim_no_surplus_edges_inX_0}
There are no surplus edges in $V(G) \setminus X_0$. 
\end{claim}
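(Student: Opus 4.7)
The plan is to derive a contradiction from the assumption that a surplus edge $uv$ exists with $u, v \in V(G) \setminus X_0$ by explicitly exhibiting a $K_{r+1}$ subgraph of $G$. Since $uv$ is a surplus edge (an edge of $G$ that is deleted in the editing to $T_r(n)$), both $u$ and $v$ lie in the same part $X_i$ for some $i \in [r]$. Neither is in $X_0$, so $b(u), b(v) \leq n/r^5$. Because $b(u)$ counts all missing and surplus edges at $u$, in particular $|X_j \setminus N(u)| \leq n/r^5$ for each $j \neq i$, and similarly for $v$. Combining this with the near-balance of the parts from Lemma~\ref{lem:partite_sizes_mostly_balanced} and the bound $|X_0| \leq n/r^5$ from Claim~\ref{claim_ub_size_X0}, I would conclude
\[ |N(u,v) \cap X_j| \geq |X_j| - 2n/r^5 \geq \frac{n}{2r} \]
for every $j \neq i$, provided $n$ is large and $r \geq 4$.

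With this lower bound in hand, I would choose $w_j \in N(u,v) \cap X_j$ independently and uniformly at random for each $j \neq i$ and show that some choice makes every pair $w_j w_k$ an edge. The total number of missing edges in $G$ is bounded by $\Dist(G, T_r(n)) \leq 2n^2/r^{10}$, so the probability that a given random pair $w_j w_k$ fails to be an edge is at most $(2n^2/r^{10})/(n/(2r))^2 = 8/r^8$. Summing over the $\binom{r-1}{2}$ pairs, the expected number of non-edges among the $w_j$'s is at most $4/r^6 < 1$, so some deterministic choice of $w_j$'s yields a $K_{r-1}$ inside $N(u,v)$. Then $\{u,v\} \cup \{w_j : j \neq i\}$ is a set of $r+1$ vertices that is pairwise adjacent in $G$: the edge $uv$ is the surplus edge, each $u w_j$ and $v w_j$ is an edge because $w_j \in N(u) \cap N(v)$, and the $w_j$'s are pairwise adjacent by construction. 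This is a $K_{r+1}$ in $G$, contradicting $K_{r+1}$-freeness and completing the proof.

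The main obstacle is getting the lower bound on $|N(u,v) \cap X_j|$ secured with some slack to spare, which requires carefully combining three ingredients: the goodness hypothesis on $u$ and $v$ (controlling the missing edges at each vertex), the near-balance of the $|X_j|$ from Lemma~\ref{lem:partite_sizes_mostly_balanced}, and the smallness of $X_0$ from Claim~\ref{claim_ub_size_X0}. All three corrections live at scale $n/r^5$ or smaller and are dwarfed by $n/r$, so the bound is comfortable once one is careful about which partition (pre-move or post-move) is being referenced at each step. After that, the probabilistic clique-finding step is entirely standard.
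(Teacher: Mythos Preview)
Your argument is correct, but it takes a genuinely different route from the paper's. The paper builds the $K_{r+1}$ \emph{greedily}: having chosen $w_2,\dots,w_{j-1}$ (all in $V(G)\setminus X_0$), it uses the fact that each of $u,v,w_2,\dots,w_{j-1}$ has at most $n/r^5$ missing edges to show that their common neighbourhood in $X_j\setminus X_0$ is nonempty, and picks $w_j$ there. Crucially, the paper needs each $w_j$ to land outside $X_0$ so that the induction can continue, and this is why it tracks $X_j\setminus X_0$ throughout.

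Your first-moment argument sidesteps that iteration entirely: you only need the \emph{global} bound $\Dist(G,T_r(n))\le 2n^2/r^{10}$ on missing edges, not the per-vertex bound on each $w_j$, and in particular you never need the $w_j$'s to avoid $X_0$. This is a cleaner and more robust route (and a standard trick in probabilistic combinatorics), at the cost of invoking a probabilistic existence argument where the paper's is purely deterministic. One small remark: you cite Lemma~\ref{lem:partite_sizes_mostly_balanced} for the near-balance of the $X_j$, but in this section the $X_j$ are \emph{by definition} the parts of $T_r(n)$ after editing, so $|X_j|\in\{\lfloor n/r\rfloor,\lceil n/r\rceil\}$ exactly and no appeal to that lemma is needed.
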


\begin{proof}
Suppose for contradiction that for two vertices $u$ and $v$ in $X_j \setminus X_0$ are adjacent for some integer $j \in [r]$. By symmetry we may assume that $j = 1$. Since neither vertex is contained in $X_0$, both $u$ and $v$ are incident with at most $\frac{1}{r^5}n$ missing edges in $X_2 \setminus X_0$. This implies that there are at most $\frac{2}{r^5}n$ vertices in $X_2 \setminus X_0$ not contained in $N(u,v)$. Since Claim~\ref{claim_ub_size_X0} implies that we have moved at most $\frac{1}{r^5}n$ vertices from $X_2$ to $X_0$,
\[ 
| (N(u,v) \cap X_2) \setminus X_0 | \geq \left\lfloor\frac{n}{r}\right\rfloor - \left\lfloor\frac{3n}{r^5}\right\rfloor > 0.
\]
Let $w_2$ be one of the vertices contained in the set $(N(u,v) \cap X_2) \setminus X_0$. Then $uvw_2$ induces a triangle in $G$. Since $w_2$ is also only incident with $\frac{1}{r^5} n$ missing edges, we can apply an identical argument using $u,v,w_2$ and the set $X_3$ to show:
\[ 
| (N(u,v,w_2) \cap X_3) \setminus X_0 | \geq \left\lfloor\frac{n}{r}\right\rfloor - \left\lfloor\frac{4n}{r^5}\right\rfloor  > 0
,\]
implying that we can find some $w_3 \in X_3$ such that $uvw_2w_3$ induces a $K_4$ in $G$. Continuing this process for each $j \in \{4,\dots,r\}$, we can always select one vertex $w_j \in X_j$ in an identical manner so that $uvw_2\dots w_j$ induces a copy of $K_{j+1}$ in $G$. This is possible since 
\[ 
| (N(u,v,w_2,\dots,w_{j-1}) \cap X_j) \setminus X_0 | \geq \left\lfloor\frac{n}{r}\right\rfloor - \left\lfloor\frac{(j+1)n}{r^5}\right\rfloor > 0
\]
for each $j$. This, however, would imply that after selecting vertices $u,v,w_2,\dots,w_{r-1}$ that induce a copy of $K_r$,  
\[ 
| (N(u,v,w_2,\dots,w_{r-1}) \cap X_r) \setminus X_0 | \geq \left\lfloor\frac{n}{r}\right\rfloor - \left\lfloor\frac{(r+1)n}{r^5}\right\rfloor > 0. 
\]
Thus, we can select a vertex in $X_r$ that is adjacent to each of $u,v,w_2,\dots,w_{r-1}$. This, however, induces a copy of $K_{r+1}$ in $G$ which is a contradiction.
\end{proof}

For each $i \in [r]$, let $d_i(v) = | (N(v) \cap X_i)\setminus X_0 |$. 
We say that $v \in X_0$ is a \emph{type $2$ vertex} if $d_i(v) > 0$ for all $i = 1,\dots,r$. Otherwise, if there exists some $i \in [r]$ for which $d_i(v) = 0$, then $v$ is a \emph{type $1$ vertex}.

\begin{claim}\label{claim_upper_bound_degree_type_2}
If $v$ is a type $2$ vertex, then there exist $i,j \in [r]$ for which 
\[1 \leq d_i(v) \leq d_j(v) \leq \frac{1}{r^3}n.\]
\end{claim}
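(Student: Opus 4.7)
My plan is to argue by contradiction, in the same spirit as Claim~\ref{claim_no_surplus_edges_inX_0}: if too few of the $d_i(v)$ are small, then $v$ can be extended greedily to a copy of $K_{r+1}$ in $G$, which is impossible.

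Specifically, suppose that at most one index $i \in [r]$ satisfies $d_i(v) \leq \frac{1}{r^3}n$. After relabeling we may assume $d_2(v), \ldots, d_r(v) > \frac{1}{r^3}n$, while $d_1(v) \geq 1$ since $v$ is type~$2$. I will build a $K_{r+1}$ of the form $\{v, w_1, w_2, \ldots, w_r\}$ with $w_k \in X_k \setminus X_0$ for each $k$. Using $d_1(v) \geq 1$, first pick any $w_1 \in (N(v) \cap X_1) \setminus X_0$. Then for $k = 2, 3, \ldots, r$ in turn, choose
\[ w_k \in \bigl(N(v, w_1, \ldots, w_{k-1}) \cap X_k\bigr) \setminus X_0. \]

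The crux is verifying that this common neighborhood is nonempty at each stage. Each previously chosen $w_\ell$ (with $\ell < k$) lies outside $X_0$, so by the definition of $X_0$ we have $b(w_\ell) \leq \frac{1}{r^5}n$. In particular, since every missing edge at $w_\ell$ corresponds to a non-neighbor outside $X_\ell$, the vertex $w_\ell$ fails to be adjacent to at most $\frac{1}{r^5}n$ vertices of $X_k$. Combined with $|(N(v) \cap X_k) \setminus X_0| = d_k(v) > \frac{1}{r^3}n$, the common neighborhood restricted to $X_k \setminus X_0$ has size at least
\[ d_k(v) - (k-1)\tfrac{1}{r^5}n \;>\; \tfrac{1}{r^3}n - (r-1)\tfrac{1}{r^5}n \;=\; \tfrac{n}{r^3}\Bigl(1 - \tfrac{r-1}{r^2}\Bigr) \;>\; 0 \]
for $r \geq 2$. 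Hence the greedy selection succeeds, producing a copy of $K_{r+1}$ on $\{v, w_1, \ldots, w_r\}$ and contradicting the $K_{r+1}$-freeness of $G$. The main (but modest) obstacle is the arithmetic bookkeeping, which is essentially identical to that carried out in Claim~\ref{claim_no_surplus_edges_inX_0}; no new techniques are required.
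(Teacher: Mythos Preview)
Your proof is correct and follows essentially the same greedy-extension strategy as the paper: assume too few of the $d_i(v)$ are small, pick a neighbor $w_1$ of $v$ in the ``small'' class using only $d_1(v)\geq 1$, and then iteratively select $w_k\in X_k\setminus X_0$ using the bounds $d_k(v)>\frac{1}{r^3}n$ and $b(w_\ell)\leq\frac{1}{r^5}n$ to show the common neighborhoods stay nonempty, producing a forbidden $K_{r+1}$. The paper organizes the same argument in two passes (first contradicting ``all $d_i(v)$ large,'' then noting the argument only used $d_1(v)>0$), whereas you go directly to the right hypothesis; the content is identical.
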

\begin{proof}
Suppose for contradiction that for all $i \in [r]$, $d_i(v) > \frac{1}{r^3}n$. By symmetry, we may assume that 
\[
\frac{1}{r^3}n < d_1(v) \leq d_2(v) \leq \cdots \leq d_r(v).
\]
Let $w_1 \in X_1$ be a neighbor of $v$. Then $d_i(w_1) \geq \frac{n}{r} - \frac{n}{r^5}$ for all integers $i \geq 2$ since $w \notin X_0$. This, along with Claim~\ref{claim_ub_size_X0}, implies that
\[ 
| (N(v,w_1) \cap X_2) \setminus X_0 | \geq \left\lfloor\frac{n}{r^3}\right\rfloor - \left\lfloor\frac{2n}{r^5}\right\rfloor > 0. 
\]
Using an argument identical to that in Claim~\ref{claim_no_surplus_edges_inX_0}, we can continue selecting vertices $w_j \in X_j$ for each $j \in \{3,\dots,r\}$ so that $vw_1w_2\dots w_j$ induces a copy of $K_{j+1}$. This is possible since for each $j \in \{3,\dots,r-1\}$, 
\[ 
| (N(v,w_1,w_2\dots,w_{j-1}) \cap X_j) \setminus X_0 | \geq \left\lfloor\frac{n}{r^3}\right\rfloor - \left\lfloor\frac{jn}{r^5}\right\rfloor > 0.
\]
This would imply, however, that $vw_1\dots w_r$ induces a copy of $K_{r+1}$. Since the above argument only relied on $d_1(v)$ being nonzero, and $v$ is a type $2$ vertex, this implies that $d_2(v) < \frac{1}{r^3} n$ completing the proof of Claim~\ref{claim_upper_bound_degree_type_2}. 
\end{proof}

Given $v \in G$ and a  path $P = vxyz$ or $P = xvyz$ of $P_3$ containing $v$, we say that $P$ is \emph{$v$-good} if none $x$, $y$, or $z$ is contained in $X_0$. The next claim will show that a type 2 vertex in $G$ would not be contained in enough copies of $P_3$ to justify $G$ being extremal. 

\begin{claim}\label{claim_no_type_2}
$G$ does not contain any type $2$ vertices. 
\end{claim}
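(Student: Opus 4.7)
The plan is to derive a contradiction with the per-vertex lower bound of Proposition~\ref{prop_lower_bound_path_vertex} by establishing the upper bound $\nu_G(v,P_3)\le \delta_2(r)\,\tfrac{n^3}{6}$ for any supposed type $2$ vertex $v$. Combining this with Propositions~\ref{prop_lower_bound_path_vertex} and~\ref{prop:n_0 bounds for the type 2 and 1 claims}(ii) forces
\[
\left(\tfrac{18}{r}-\tfrac{42}{r^2}-\tfrac{12}{r^3}\right)n^3 \;\le\; \tfrac{1}{r^{10}}\binom{n-1}{3}+\tfrac{n^3}{r^4},
\]
a contradiction for all $r\ge 4$ and sufficiently large $n$, since the left side is of order $n^3/r$ while the right side is only of order $n^3/r^4$.

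To produce the upper bound, fix a type $2$ vertex $v$ and split copies of $P_3$ through $v$ into \emph{$v$-good} paths (whose three other vertices all lie in $V(G)\setminus X_0$) and those using at least one further vertex of $X_0$. By Claim~\ref{claim_ub_size_X0}, the second group contributes at most $|X_0|\cdot\binom{n-2}{2}\cdot 12=O(n^3/r^5)$, which is negligible. For the $v$-good count, Claim~\ref{claim_no_surplus_edges_inX_0} gives that $G[V(G)\setminus X_0]$ is $r$-partite with parts $X_1,\dots,X_r$ of size at most $\lceil n/r\rceil$, so every good vertex $w\in X_i$ satisfies $|N(w)\setminus X_0|\le \tfrac{r-1}{r}n+O(n/r^5)$, while Claim~\ref{claim_upper_bound_degree_type_2} yields
\[
D:=|N(v)\setminus X_0|\;\le\;\tfrac{r-2}{r}n+\tfrac{2}{r^3}n+O(1),
\]
since two of the $d_i(v)$'s are bounded by $n/r^3$ and the remaining $r-2$ are at most $\lceil n/r\rceil$. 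Counting ordered $v$-good paths $abcd$ by the position of $v$ (endpoint positions contribute a single factor of $D$; interior positions contribute a factor $D(D-1)$), and then dividing by $2$ for the flip automorphism of $P_3$, yields
\[
\nu_G(v,P_3)\;\le\;D\cdot\tfrac{r-1}{r}n\cdot\bigl(D+\tfrac{r-1}{r}n\bigr)+O(n^3/r^5).
\]
Substituting the bound on $D$ produces the leading term $(r-1)(r-2)(2r-3)\,\tfrac{n^3}{r^3}=\bigl(2-\tfrac{9}{r}+\tfrac{13}{r^2}-\tfrac{6}{r^3}\bigr)n^3$, which matches $\tfrac{\delta_2(r)}{6}n^3=\bigl(2-\tfrac{9}{r}+\tfrac{13}{r^2}-\tfrac{16}{r^4}+O(1/r^5)\bigr)n^3$ through order $1/r^2$ and is strictly smaller at order $1/r^3$ (since $6/r^3-16/r^4>0$ for $r\ge 4$).

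The main obstacle will be bookkeeping: because the leading upper bound and $\delta_2(r)/6$ agree through three Laurent orders, the lower-order corrections from $|X_0|$, the ceilings in the part sizes, the $n/r^5$ missing-edge budget of good vertices, and the $O(n^3/r^5)$ residual from paths meeting $X_0$ outside $v$ all have to remain below the $6/r^3$ slack separating the leading term from $\tfrac{\delta_2(r)}{6}$. This should be routine, since the overall gap $\OPT_r(P_3)\binom{n-1}{3}-\delta_2(r)\tfrac{n^3}{6}$ that is ultimately violated has order $n^3/r$, dwarfing every correction term in the calculation; the structural content of the argument is already carried by Claims~\ref{claim_ub_size_X0}--\ref{claim_upper_bound_degree_type_2}.
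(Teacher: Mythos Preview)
Your proposal is correct and follows the same overall strategy as the paper: bound $\nu_G(v,P_3)$ from above for a hypothetical type~$2$ vertex using Claims~\ref{claim_ub_size_X0}--\ref{claim_upper_bound_degree_type_2}, then contradict Proposition~\ref{prop_lower_bound_path_vertex} via Proposition~\ref{prop:n_0 bounds for the type 2 and 1 claims}(ii). The only difference is organizational: the paper performs a case analysis on the parts $X_i$ containing $u_1,u_2,u_3$ to obtain a bound that is \emph{exactly} $\delta_2(r)\tfrac{n^3}{6}$, whereas you bound everything through the single quantity $D=|N(v)\setminus X_0|$ and the formula $D\cdot\tfrac{r-1}{r}n\cdot(D+\tfrac{r-1}{r}n)$. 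Your route is cleaner but coarser; after substituting the full bound $D\le(\tfrac{r-2}{r}+\tfrac{2}{r^3})n$ and adding the $X_0$ contribution, your upper bound may overshoot $\delta_2(r)\tfrac{n^3}{6}$ by $O(n^3/r^5)$, so strictly speaking you do not prove the inequality $\nu_G(v,P_3)\le\delta_2(r)\tfrac{n^3}{6}$ as stated. However, as you correctly observe in your final paragraph, this does not matter: the gap $\OPT_r(P_3)\binom{n-1}{3}-\nu_G(v,P_3)$ is still of order $n^3/r$, far exceeding the $\tfrac{1}{r^{10}}\binom{n-1}{3}+\tfrac{n^3}{r^4}$ tolerance in Proposition~\ref{prop_lower_bound_path_vertex}, so the contradiction goes through with room to spare.
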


\begin{proof}
Suppose that $v \in X_0$ is a type $2$ vertex. Then by symmetry, $d_1(v) \leq d_2(v) < \frac{1}{r^3}n$. Let $vu_1u_2u_3$ be a $v$-good path. We can count the number of these paths by considering the possible locations of $u_1$. The following list will provide the location of $u_1$, followed by the maximum number of paths of the form $vu_1u_2u_3$.

\begin{enumerate}
    \item If $u_1 \in X_1$ or $u_1 \in X_2$, then there are at most $\frac{2n}{r^3}$ ways to select $u_1$. Otherwise, there are at most $\frac{r-2}{r}n$ ways to select $u_1$. There are $\frac{(r-1)^2}{r^2}n^2$ ways to select $u_2$ and $u_3$ since the only requirement is that each vertex cannot be in the same set as its predecessor. This gives
    \[ \left(\frac{2}{r^3} + \frac{r-2}{r} \right) \cdot  \frac{(r-1)^2}{r^2}n^3 \]
    $v$-good copies of $P_3$ where $v$ is an end point.
\end{enumerate}

Next suppose that $u_1vu_2u_3$ is a $v$-good path. The maximum number of such paths can be counted by considering the locations of $u_1$ and $u_2$. In each case below, we give the location of $u_1$ and $u_2$, followed by the corresponding maximum number of $P_3$ subgraphs. 
\begin{enumerate}
    \item If $u_1,u_2 \in X_1$ or $u_1,u_2 \in X_2$, then there are at most $\frac{n^2}{r^6}$ ways to select each of $u_1$ and $u_2$ from either of the two sets. There are at most $\frac{r-1}{r}n$ ways to select $u_3$. If $u_1 \in X_1$ and $u_2 \in X_2$ or $u_1 \in X_2$ and $u_2 \in X_1$, then there are at most then there are at most $\frac{n^2}{r^6}$ ways to select $u_1$ and $u_2$ from each of their given sets. Again, there are at most $\frac{r-1}{r}n$ ways to select $u_3$. This accounts for at most
    \[ 
    \frac{4}{r^{6}}\cdot \frac{(r-1)}{r} \cdot n^3.
    \]
    copies of $P_3$.
    \item If exactly one of $u_1$ or $u_2$ is contained in $X_1 \cup X_2$, then there are at most $\frac{n}{r^3}$ ways to select that particular vertex. The vertex not in $X_1 \cup X_2$ can be selected from $(r-2)$ possible sets. Thus, there are $\frac{4(r-2)n^2}{r^4}$ ways to select $u_1$ and $u_2$. Finally, there are at most $\frac{r-1}{r}n$ ways to select $u_3$. This accounts for at most 
    \[ 
    \frac{4}{r^3}\cdot \frac{(r-1)(r-2)}{r^2} \cdot n^3 
    \]
    copies of $P_3$.
    \item If $u_1,u_2 \notin X_1 \cup X_2$, then there are at most $\frac{(r-2)(r-3)}{r^2}n^2$ ways to choose $u_1$ and $u_2$ if they are in different sets, and $\frac{(r-2)}{r^2}n^2$ ways to choose $u_1$ and $u_2$ if they are in the same set. As there are at most $\frac{r-1}{r}n$ ways to select $u_3$, this accounts for at most 
    \[ 
    \left( \frac{(r-1)(r-2)(r-3)}{r^3} + \frac{(r-1)(r-2)}{r^3} \right)n^3 
    \]
    copies of $P_3$.
\end{enumerate}
There are at most $\frac{2}{r^5}n^3$ subgraphs containing $v$ and at least one other vertex in $X_0$. Thus, combining each of the terms we have calculated, we get the following upper bound: 
\[ \nu(v,P_3) \leq \delta_2(r)\frac{n^3}{6}.\]
where $\delta_2(r)$ is taken from Proposition~\ref{prop:n_0 bounds for the type 2 and 1 claims}, which then implies the following: 
\begin{equation}\label{equation showing type 2 cannot exist}
    \OPT_r(P_3) \binom{n-1}{3} - \nu(v,P_3) \geq \left(\frac{18}{r} - \frac{42}{r^2} - \frac{12}{r^3} \right)n^3.
\end{equation}
It is straightforward to verify that for all $r  \geq  4$,
\[ 
\frac{18}{r} - \frac{48}{r^2} - \frac{12}{r^3} > \frac{1}{r^4}.
\]

Since \eqref{equation showing type 2 cannot exist} must be true of each type $2$ vertex and $G$ is assumed to be an extremal graph, Proposition~\ref{prop_lower_bound_path_vertex} implies that $G$ cannot contain any type $2$ vertices.
\end{proof}

By Claim~\ref{claim_no_type_2}, each $v \in X_0$ is a type $1$ vertex. We will now show that if $u$ and $v$ are two type $1$ vertices for which $d_i(v) = d_i(u) = 0$, then $u$ and $v$ cannot be adjacent. Specifically, we will prove that if $u$ and $v$ are adjacent, then one or the other is not contained in sufficiently many $P_3$ subgraphs to justify $G$ being extremal. Note this is slightly different from our approach to type $2$ vertices, as we will not disprove the existence of type $1$ vertices. 

\begin{claim}\label{claim_shared_neighborhood_type_one}
Suppose that $u$ and $v$ are two adjacent type 1 vertices for which $d_i(v) = d_i(u) = 0$. Then there exists some index $j \neq i$ for which 
\[ |N(u,v) \cap (X_j \setminus X_0)| \leq \frac{1}{r^3}n.\]
\end{claim}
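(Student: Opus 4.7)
The plan is to argue by contradiction: assume $|N(u,v) \cap (X_j \setminus X_0)| > \frac{1}{r^3}n$ for every $j \in [r] \setminus \{i\}$, and greedily construct a $K_{r+1}$ inside $G$, contradicting $K_{r+1}$-freeness. The structure follows the clique-extension template already used in Claims~\ref{claim_no_surplus_edges_inX_0} and~\ref{claim_upper_bound_degree_type_2}, but with $u$ and $v$ playing the role of the initial edge and with the index $i$ deliberately excluded from the greedy selection.

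Enumerate $[r] \setminus \{i\}$ as $j_1, j_2, \ldots, j_{r-1}$ and inductively choose $w_{j_k} \in X_{j_k} \setminus X_0$ so that $\{u, v, w_{j_1}, \ldots, w_{j_k}\}$ is a clique. At step $k$, start from $N(u,v) \cap (X_{j_k} \setminus X_0)$, which by the contradiction hypothesis has size strictly greater than $n/r^3$, and intersect with the neighborhoods of the previously selected $w_{j_1}, \ldots, w_{j_{k-1}}$. The key point, as in Claim~\ref{claim_no_surplus_edges_inX_0}, is that each $w_{j_\ell}$ lies outside $X_0$ and therefore satisfies $b(w_{j_\ell}) \leq n/r^5$; since $w_{j_\ell} \in X_{j_\ell}$ with $j_\ell \neq j_k$, any vertex of $X_{j_k}$ that fails to be adjacent to $w_{j_\ell}$ contributes a missing edge incident to $w_{j_\ell}$. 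Thus at most $n/r^5$ vertices of $X_{j_k}$ are discarded per previously chosen $w$, so after at most $r-2$ such restrictions the candidate pool still contains at least $n/r^3 - (r-2)n/r^5 > 0$ vertices for $r \geq 4$, and the next $w_{j_k}$ can be chosen.

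After $r-1$ iterations, the set $\{u, v, w_{j_1}, \ldots, w_{j_{r-1}}\}$ consists of $r+1$ pairwise adjacent vertices: $u \sim v$ by hypothesis, each $w_{j_k}$ is adjacent to both $u$ and $v$ by construction, and $w_{j_k} \sim w_{j_\ell}$ for $k < \ell$ since $w_{j_\ell}$ was selected in the common neighborhood of the earlier $w$'s. This $K_{r+1}$ contradicts the $K_{r+1}$-freeness of $G$. The calculations are all routine; the one conceptual point is that the argument never requires a vertex in $X_i$, because the $r-1$ parts indexed by $[r] \setminus \{i\}$ alone provide enough room to extend the edge $uv$ into a clique of size $r+1$. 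This is precisely why the hypothesis $d_i(u) = d_i(v) = 0$ is compatible with the contradiction, and so only $r - 1$ common-neighborhood lower bounds (rather than $r$) are needed to close the argument.
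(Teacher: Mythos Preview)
Your proof is correct and follows essentially the same approach as the paper: assume for contradiction that the common neighborhood is large in every part $X_j$ with $j \neq i$, then greedily extend the edge $uv$ one part at a time to build a $K_{r+1}$, using that each previously selected $w_{j_\ell} \notin X_0$ has at most $n/r^5$ missing edges to any other part. The paper's version simply fixes $i=1$ by symmetry and indexes the selected vertices as $w_2,\ldots,w_r$, but the argument and the numerical bound are the same as yours.
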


\begin{proof}
By symmetry we may assume that $i = 1$. Suppose for contradiction that $|N(u,v) \cap (X_j \setminus X_0)| > \frac{1}{r^3}n$ for all $j \in \{2,\dots,r\}$. Using an argument identical to those in Claims~\ref{claim_no_surplus_edges_inX_0} and \ref{claim_upper_bound_degree_type_2}, select one vertex $w_j$ in $X_j$ for all $j = 2,\dots,r$, starting with $X_2$, so that $w_j \in N(u,v,w_2,\dots,w_{j-1}) \cap X_j$. This is possible since
\[
| (N(u,v,w_2,\dots,w_{j-1}) \cap X_j) \setminus X_0 | \geq \left\lfloor\frac{n}{r^3}\right\rfloor - \left\lfloor\frac{(j-1)n}{r^5}\right\rfloor > 0
\]
for all $j \in \{2,\dots,r-1\}$. After selecting vertices $w_2,\dots,w_r$ in this way, we once again obtain a copy of $K_{r+1}$ in $G$ which is a contradiction.
\end{proof}

\begin{claim}\label{claim_no_type_one}
If $u$ and $v$ are two type one vertices for which $d_i(v) = d_i(u) = 0$, then $u$ and $v$ are not adjacent. 
\end{claim}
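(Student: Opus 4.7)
The plan is to imitate the proof of Claim~\ref{claim_no_type_2}. Suppose, for contradiction, that $u$ and $v$ are adjacent type 1 vertices with $d_i(u)=d_i(v)=0$. Applying Claim~\ref{claim_shared_neighborhood_type_one}, I would first choose an index $j\neq i$ with $|N(u,v)\cap(X_j\setminus X_0)|\le n/r^3$; by symmetry we may assume $i=1$ and $j=2$.

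The next step is to count the $v$-good copies of $P_3$ (those paths containing $v$ whose remaining three vertices avoid $X_0$), partitioning them by the position of $v$ in the path and by the partite-set memberships of the other vertices, following the case structure of Claim~\ref{claim_no_type_2}. The condition $d_1(v)=0$ kills every case in which $v$ has a neighbor in $X_1$. For the cases where a neighbor of $v$ lies in $X_2$, I would use the dichotomy that such a neighbor either lies in $N(u,v)\cap(X_2\setminus X_0)$ (at most $n/r^3$ choices) or witnesses a missing edge from $u$ to $X_2\setminus X_0$; the edit-distance hypothesis $\Dist(G,T_r(n))\le 2n^2/r^{10}$ together with the fact that $u$ must originally have been placed in $X_1$ (forced by $d_1(u)=0$) forces the latter contribution to be small as well. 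Summing over all cases, and adding the at most $O(n^3/r^5)$ copies of $P_3$ through $v$ and another $X_0$-vertex, the target bound is
\[ \nu_G(v,P_3) \le \delta_1(r)\frac{n^3}{6} + o(n^3),\]
where $\delta_1(r)$ is the polynomial from Proposition~\ref{prop:n_0 bounds for the type 2 and 1 claims}.

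Finally, applying Proposition~\ref{prop:n_0 bounds for the type 2 and 1 claims}(i) yields
\[ \OPT_r(P_3)\binom{n-1}{3} - \nu_G(v,P_3) \ge \left(\frac{9}{r} - \frac{39}{2r^2}\right)n^3 - o(n^3),\]
and for every $r\ge 4$ this exceeds $n^3/r^4$ once $n$ is large enough. That strict separation contradicts the vertex-wise lower bound of Proposition~\ref{prop_lower_bound_path_vertex}, so $u$ and $v$ cannot be adjacent. The main obstacle will be the $X_2$-case accounting: unlike the type 2 situation, where the analogous degree bound $d_2(v)\le n/r^3$ is a direct consequence of avoiding a $K_{r+1}$, here the only relevant information in $X_2$ is about the common neighborhood of $u$ and $v$, and translating that into a bound on the number of relevant $P_3$-paths requires juggling both Claim~\ref{claim_shared_neighborhood_type_one} and the edit-distance control on $u$'s missing edges. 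The other cases are essentially identical to those in Claim~\ref{claim_no_type_2} but produce the slightly larger polynomial $\delta_1$ rather than $\delta_2$, which is still loose enough to yield the required contradiction by Proposition~\ref{prop:n_0 bounds for the type 2 and 1 claims}(i).
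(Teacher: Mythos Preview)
Your overall architecture matches the paper's, but the step where you bound the number of neighbours of $v$ in $X_2\setminus X_0$ does not go through as written. You propose to split such neighbours into those also adjacent to $u$ (at most $n/r^3$ by Claim~\ref{claim_shared_neighborhood_type_one}) and those not adjacent to $u$, and to control the latter by saying they are ``missing edges'' of $u$ which are few by the global edit-distance bound. Two things fail here. First, nothing forces $u$ to have been placed in $X_1$ originally: $d_1(u)=0$ only says $u$ has no neighbours in $X_1\setminus X_0$, and a bad vertex may well have started in any $X_k$ and simply have all of its $X_1$-edges missing. Second, and more seriously, even if $u$ did originate in $X_1$, the vertex $u$ lies in $X_0$ precisely because it is \emph{bad}, i.e.\ $b(u)>n/r^5$; there is no usable upper bound on $b(u)$, so you cannot infer that $u$ has only $O(n/r^{10})$ or $O(n/r^5)$ missing edges into $X_2$. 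The global edit distance $2n^2/r^{10}$ constrains the \emph{sum} of all $b(w)$, not any individual bad vertex.

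The paper handles this differently: it exploits the symmetry between $u$ and $v$. The sets $(N(v)\setminus N(u))\cap(X_2\setminus X_0)$ and $(N(u)\setminus N(v))\cap(X_2\setminus X_0)$ are disjoint subsets of $X_2\setminus X_0$, whose total size is at most roughly $n/r$; by pigeonhole one of them has size at most about $n/(2r)$, and after relabelling we may assume it is the one involving $v$. This yields only $d_2(v)\le \frac{r^2+1}{2r^3}n\approx n/(2r)$, which is much weaker than the $O(n/r^3)$ your approach was aiming for --- and this is exactly why the polynomial $\delta_1(r)$ in Proposition~\ref{prop:n_0 bounds for the type 2 and 1 claims} is noticeably larger than $\delta_2(r)$. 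The subsequent case analysis and the comparison with Proposition~\ref{prop_lower_bound_path_vertex} then proceed just as you outlined.
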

\begin{proof}
By symmetry, Claim~\ref{claim_shared_neighborhood_type_one} implies that
\[ 
N(u,v) \cap (X_2 \setminus X_0)  \leq \frac{1}{r^3}n.
\]
Therefore without loss of generality, 
\[ 
|( N(v) \cap (X_2 \setminus X_0) ) \setminus N(u)| \leq \frac{r^2 - 1}{2r^3}n.
\]
Hence, 
\[ d_2(v) \leq \frac{r^2 +1}{2r^3} n.\]
Suppose that $vu_1u_2u_3$ is a $v$-good path. Similar to Claim~\ref{claim_no_type_2} we can count the number of such paths by considering the location of $u_1$. 
\begin{enumerate}
    \item If $u_1 \in X_2$ then there are $\frac{r^2 +1}{2r^3}n$ ways to choose $u_1$. Otherwise, there are $\frac{r-2}{r}n$ ways to choose $u_1$. Similar to before, there are $\frac{(r-1)^2}{r^2}n^2$ ways to choose $u_2$ and $u_3$. This accounts for at most  
    \[ 
    \left(\frac{r^2 +1}{2r^3} + \frac{r-2}{r} \right) \frac{(r-1)^2}{r^2}n^3
    \]
    copies of $P_3$ where $v$ is an end-vertex.
\end{enumerate}
Next we can count the number of $v$-good paths of the form $u_1vu_2u_3$ by considering the locations of $u_1$ and $u_2$. 
\begin{enumerate}
    \item If $u_1,u_2 \in X_2$, then there are at most $\left(\frac{r^2 +1}{2r^3}n\right)^2$ ways to select $u_1$ and $u_2$. There are $\frac{(r-1)}{r}n$ ways to select $u_3$. This gives an upper bound of
    \[ \left(\frac{r^2 +1}{2r^3}\right)^2\cdot \frac{(r-1)}{r} \cdot n^3 \]
    copies of $P_3$. 
    \item If exactly one of $u_1$ or $u_2$ is contained in $X_2$, then there are $\frac{r^2 +1}{2r^3}n$ to choose that specific vertex. Since neither of the remaining vertices can be contained in the same set as its neighbors, there are at most $\frac{(r-1)(r-2)}{r^2}n^2$ ways to choose the remaining vertices on the path. This gives at most
    \[ 2\cdot \frac{r^2 +1}{2r^3} \cdot \frac{(r-1)(r-2)}{r^2} \cdot n^3 \]
    copies of $P_3$. 
    \item If $u_1,u_2 \notin X_2$, then there are at most $\frac{(r-2)(r-3)}{r^2}n^2$ ways to choose $u_1$ and $u_2$ if they are in a different set and $\frac{(r-2)}{r^2}n^2$ ways if they are in the same set. There are $\frac{(r-1)}{r}n$ ways to select $u_3$, giving an upper bound of
    \[ \left( \frac{(r-1)(r-2)(r-3)}{r^3} + \frac{(r-1)(r-2)}{r^3} \right)n^3 \]
    copies of $P_3$. 
\end{enumerate}
Since there are at most $\frac{2}{r^5}n^3$ copies of $P_3$ containing $v$ and at least one other vertex in $X_0$, 
\[ \nu(v,P_3) \leq \delta_1(r)\frac{n^3}{6}.\]
Where $\delta_1(r)$ is taken from Proposition~\ref{prop:n_0 bounds for the type 2 and 1 claims}, which implies the following:
\[
    \OPT_r(P_3) \binom{n-1}{3} - \nu(v,P_3) \geq \left(\frac{9}{r} - \frac{39}{2r^2} \right)n^3.
\]
It is straightforward to verify that for all $r  \geq  4$,
\[ 
\frac{9}{r} - \frac{39}{2r^2} > \frac{1}{r^4}.
\]

Thus, by Proposition~\ref{prop_lower_bound_path_vertex}, vertex $v$ cannot exist in $G$ under the assumption that $G$ is extremal. Since $u$ and $v$ were arbitrarily chosen, this completes the proof of Claim~\ref{claim_no_type_one}. 
\end{proof}

\textbf{Proof of Theorem 1.3(ii), continued.} From
Claim~\ref{claim_no_type_one}, if two vertices $u$ and $v$ in $X_0$ have the property
that $d_i(u) = d_i(v) = 0$ for some $i \in [j]$, then $u$ and $v$ cannot be
adjacent. Thus, we can take each vertex in $X_0$ (since each vertex is a type
$1$ vertex) and place it in some partite set so that $G$ is a an $r$-partite
graph. Adding the necessary edges to make $G$ a complete $r$-partite graph,
however, would increase the number of $P_3$ subgraphs in $G$. As we have already
shown by Proposition~\ref{lem:rpartite_turan_best} that the Tur\'{a}n graph is
best possible among all complete $r$-partite graphs, this completes the proof of
Theorem~\ref{main_p4_thm}(ii).
\end{proof}

\section{Concluding Remarks}\label{sec:conclusion}

The main result in this paper follows a similar approach to that used
in~\cite{lidick2020maximizing}, which determined that the five cycle $C_5$ is
also $K_{r+1}$-Tur\'an-good for $r \ge 3$. It is likely that this method
could be applied to other graphs, perhaps including $P_4$ or $C_6$. However, as
the number of vertices in the target graph increases, the number of graphs
considered in the flag algebra step grow exponentially and the number of cases
in the stability result increase as well. 
Therefore, the authors believe
a different method will need to be used to
investigate the conjecture of Gerbner and Palmer that $P_{\ell}$ is
$K_{r+1}$-Tur\'{a}n-good for all values of $\ell$.

\section{Acknowledgements}

The authors would like to thank Bernard Lidick\'{y} for the use of his flag algebra program.

\bibliographystyle{abbrv}
\bibliography{P3}

\section{Appendix}

Link to SageMath code that can be used to verify equation~\eqref{eq:factoring to show OPT is what we want} in Theorem~\ref{main_p4_thm}(i):

\medskip

\href{https://drive.google.com/file/d/1CURgyBgoL54CyYg97t8u2aKkO3A0cNbW/view?usp=sharing}{\textcolor{blue}{https://drive.google.com/file/d/1CURgyBgoL54CyYg97t8u2aKkO3A0cNbW/view?usp=sharing}}
\end{document}